\newtheorem{theorem}{Theorem}[section]
\newtheorem{lemma}[theorem]{Lemma}
\newtheorem{proposition}[theorem]{Proposition}
\newtheorem{corollary}[theorem]{Corollary}
\theoremstyle{remark}
\newtheorem{remark}[theorem]{Remark}
\newcommand{\R}{\mathbb{R}}
\newcommand{\RN}{\mathbb{R}^N}
\newcommand{\TN}{\mathbb{T}^N}
\begin{document}

	\title[Rates of convergence for nonlocal HJB equations]{Rates of Convergence in Periodic Homogenization of Nonlocal Hamilton-Jacobi-Bellman Equations}
	
	\author[]{Andrei Rodríguez-Paredes}
	\address{
		Andrei Rodr\'iguez-Paredes:
		Departamento de Matem\'atica y C.C., Universidad de Santiago de Chile,
		Casilla 307, Santiago, CHILE.
		\newline {\tt andrei.rodriguez@usach.cl}
	}

	\author[]{Erwin Topp}
	\address{
		Erwin Topp:
		Departamento de Matem\'atica y C.C., Universidad de Santiago de Chile,
		Casilla 307, Santiago, CHILE.
		\newline {\tt erwin.topp@usach.cl}
	}

	\begin{abstract} 
		In this paper we provide a rate of convergence for periodic homogenization of  Hamilton-Jacobi-Bellman equations with nonlocal diffusion. The result is based on the regularity of the associated effective problem, where the convexity plays a crucial role. Such regularity estimates are possible from the available literature once we provide a representation formula for the effective Hamiltonian, a result that has an independent interest.
	\end{abstract}

	\maketitle
	
	\noindent\textbf{Keywords:} Rates of convergence, homogenization, nonlocal elliptic equations, nonlinear equations, Hamilton-Jacobi equations, representation formulas.
    
    \noindent\textbf{AMS classification (2020):} 35D40, 35J60, 35R09, 35B27.
	\section{Introduction}
        
        In this paper we are interested in the periodic homogenization of stationary Hamilton-Jacobi-Bellman equations of the form
        	\begin{align}\label{eq}
            	u^\epsilon + H(x, \nicefrac{x}{\epsilon}, Du^\epsilon(x), u^\epsilon) = 0 \quad \mbox{for} \ x \in \R^N,
        	\end{align}
    	where $\epsilon \in (0,1)$. The Hamiltonian $H$ in the equation above has a nonlocal dependence in the last variable, and for this reason we start this note describing its structure.
    	
    	Denote by $\mathbb T^N$ the $N$-dimensional flat torus, and let $\Theta$ be a compact metric space. We consider continuous and bounded functions
        	\begin{equation*}
            	f: \R^N \times \mathbb T^N \times \Theta \to \R^N; \quad l : \R^N \times \mathbb T^N \times \Theta \to \R,
        	\end{equation*}
    	and for $x, p \in \RN, y \in \TN$ and $\varphi \in C^2_b(\R^N)$, we write
    		\begin{equation}\label{H}
            	H(x,y, p, \varphi) = \sup_{\theta} \{ -L_{y}^\theta \varphi(x) -f^{\theta}(x,y)\cdot p - l^{\theta} (x,y) \},
        	\end{equation}
    	where we have adopted the notation $f^\theta(x,y) = f(x,y,\theta)$, and similarly for $l$.
        	
       	We fix $\sigma \in (1,2)$---the ``order" of the operator $L$---and consider a function $A: \mathbb{T}^N \times S^{N-1} \times \Theta \to \mathbb S^N$, where $S^{N - 1}$ is the unit sphere in $\R^N$ and $\mathbb S^N$ is the set of $N \times N$ symmetric matrices. Thus, we have at disposal a family of kernels $\{ K^\theta \}_{\theta \in \Theta}$ defined as
        	\begin{equation}\label{defK}
            	K^\theta(y,z) = |z^t A^\theta(y, \hat{z}) z|^{-(N + \sigma)/2}, \quad y \in \mathbb{T}^N, \ z \in \R^N \setminus \{ 0 \},
        	\end{equation}
        where $\hat{z}= \nicefrac{z}{|z|}$, with $K^\theta(y,z) = K^\theta(y,-z)$, and through them, for each $\theta \in \Theta$, $x, y \in \R^N$ and $\varphi\in C_b^2(\RN)$, we define
        	\begin{equation}\label{nonloc_L}
            	L_{y}^\theta \varphi(x) = \mathrm{P.V.} \int_{\RN} [\varphi(x+z) - \varphi(x)] K^\theta (y,z)  \,dz,
        	\end{equation}
       	where $P.V.$ stands for the Cauchy Principal Value. When $A^\theta(y, \hat z) = c_\sigma I_N$ for all $y\in\mathbb{T}^N$, $I_N$ is the identity matrix, and $c_\sigma > 0$ is an adequate normalizing constant, $L^\theta$ equals $\Delta^{\sigma/2}$, the nowadays well-known fractional Laplacian of order $\sigma$. See~\cite{di2012hitchhikers} for a complete review of this operator. Notice that the kernels $K$ defining $L$ are homogeneous of degree $N + \sigma$, that is $K^\theta(y, \lambda z) = \lambda^{-(N + \sigma)} K^\theta(y, z)$ for all $\theta, y$, $z \neq 0$ and $\lambda > 0$.
       	
       	\medskip
       	
       	        Having described the structure of~\eqref{eq}, we are in position to reveal our main concerns: establishing the convergence of the family of (viscosity) solutions $\{ u^\epsilon \}_\epsilon$ of \eqref{eq} to a function $\bar u \in C(\R^N)$, and providing a rate for the convergence $u^\epsilon\to \bar{u}$; i.e., an estimate of the form $\|u^\epsilon - \bar{u}\|_\infty = \omega(\epsilon)$, where $\omega$ is an explicit modulus of continuity.
       	        In the procedure, we are also interested in the study of the limit function $\bar u$ as the solution of an ``average" fractional PDE, the so-called \textsl{effective problem}.
       	
       	\medskip
       	
       	Fractional PDEs have attracted the attention of the community in the last decade because of its wide number of applications. For instance, problem~\eqref{eq} can be regarded as the dual formulation of an optimal control problem of jump diffusion processes. In the case $A$ is independent of $z$ in~\eqref{defK} and unitary, denoting $\tilde A = \tilde A^\theta(y)$ such that $A = \tilde A \tilde A^t$ (the ``square root" of $A$), we have the nonlocal operator $L$ takes the so-called \textsl{L\'evy-Ito} form
       	$$
       	L^\theta_y u(x) = \mathrm{P.V.} \int_{\R^N} [u(x + \tilde A^\theta(y) z) - u(x)] |z|^{-(N + \sigma)}dz.
       	$$
       	
       	Written in this way, $L$ arises in the Ito formula of a general L\'evy process as the \textsl{jump} component of its generator. Now, we consider the controlled SDE
       	\begin{equation*}
       	dX_t = f(X_t, \epsilon^{-1} X_t, \theta_t) dt + \int_{\R^N} \tilde A(\epsilon^{-1} X_s, \theta_s) z \tilde N(dt, dz),
       	\end{equation*}
       	where for a given filtered probability space, $\tilde N$ is the compensated Poisson random measure of the isotropic $\sigma$-stable L\'evy process (whose infinitesimal generator is the fractional Laplacian of order $\sigma$), and where $\theta_t$ is an adapted process with values in $\Theta$. Then, for each $x \in \R^N$, the value function
       	$$
       	u^\epsilon(x) = \inf_{\theta(\cdot)} \mathbb{E} \int_{0}^{\infty} e^{-s} \ell(X_s, \epsilon^{-1} X_s, \theta_s) ds,
       	$$
       	is a viscosity solution to~\eqref{eq}, see~\cite{OS, Pham}. Thus, the homogenization phenomena observed as $\epsilon \to 0$ in~\eqref{eq} are related to the study of the average behavior of this highly oscillatory stochastic optimal control problem. Other applications related to nonlocal problems can be found in the literature, such as dislocation dynamics \cite{imbert2008homogenization} and front propagation of reaction-diffusion equations \cite{souganidis2019front}.

        \medskip

        Periodic homogenization results for first- and second-order degenerate elliptic Hamilton-Jacobi equations have a long history and have constituted an active research field since the appearance of the pioneering work of Lions, Papanicolaou and Varadhan~\cite{LPV}. A (non-exhaustive) list of notable contributions to the field developed since then includes~\cite{evans1989perturbed, evans1992periodic, Ab01, concordel1996periodic, barles2007some}. We also highlight recent works on stochastic homogenization (see, e.g., \cite{souganidis1999stochastic, lions2010stochastic, caffarelli2010rates, armstrong2012stochastic}). For an introduction to homogenization results in elliptic PDEs with variational structure, where a different set of tools is available, we refer the reader to the books \cite{cioranescu1999introduction, tartar2009general}.

         In the nonlocal setting, periodic homogenization results have been obtained by working roughly within the framework outlined above, see e.g.~\cite{A09, A12, schwab2010periodic, Bct19, ciomaga2020periodic, kassmann2019homogenization, piatnitski2017periodic}. However, to the best of our knowledge, the rate of convergence for the homogenization of nonlocal equations has not yet been addressed. For expository reasons, we proceed to explain the main ideas behind this pursuit, beginning with some well-known facts, and refer the reader to Section~\ref{sec1} for precise assumptions on the data in~\eqref{eq} and the definition of some auxiliary concepts.
        
        \medskip

        Previous results for fractional equations show that the study of the cell problem associated to \eqref{eq} is possible following the scheme of Lions, Papanicolaou and Varadhan. This leads to the existence of an effective Hamiltonian $\bar H$, which in this case can be regarded as a function $\bar H : \RN \times \RN \times C_b^2(\RN) \to \R$. 
        This function is defined through an eigenvalue problem that we describe next: for each $x, p \in \R^N$ and $\phi \in C_b^2(\RN)$, there exists a unique constant $c \in \R$ such that
            \begin{equation}\label{cp}
                \sup_\theta \{ -L^\theta_y w(y) - L^\theta_y \phi(x) - f^\theta(x,y)\cdot p - l^{\theta}(x,y) \} = c\quad\text{for} \  y \in \mathbb{T}^N,
            \end{equation}
        has a viscosity solution $w \in C(\TN)$. This equation is referred as the \textsl{cell problem} associated to $H$, and $c$ is the \textsl{ergodic constant}, see for instance~\cite{BChCI} for the solvability of such a problem. This allows us to define $\bar H(x,p,\phi) := c$. 
        
        In spite of not having closed formulae for $\bar H$ in most cases, it is possible to prove that this function \emph{inherits} various properties from $H$, such as continuity and/or convexity. This is obtained following ideas already in \cite{evans1992periodic}, and in the nonlocal setting it is done in e.g.~\cite{ciomaga2020periodic}. Another property satisfied by $\bar H$ in the present nonlocal context is the \emph{degenerate ellipticity}, intended in the following sense: given $x, p \in \RN$ and $\varphi_1, \varphi_2 \in C^2_b(\RN)$ such that $\varphi_1(x) = \varphi_2(x)$ and $\varphi_1 \geq \varphi_2$ in $\RN$, then
            \begin{equation}\label{barHmon}
                \bar H(x,p,\varphi_1) \leq \bar H(x,p,\varphi_2).
            \end{equation}

        Following the classical \textsl{perturbed test function method} of Evans~\cite{evans1989perturbed, evans1992periodic}, it is possible to prove that the sequence of solutions $u^\epsilon$ to~\eqref{eq} converges as $\epsilon \to 0$ (up to subsequence) to the solution of the \textsl{effective} problem
            \begin{equation}\label{eqnonloceff}
                \bar u + \bar H(x, D\bar u(x), \bar u) = 0 \quad \mbox{for} \ x \in \RN.
            \end{equation}
  	  
  	  Thus, it is a well-known fact that uniqueness for the limit problem~\eqref{eqnonloceff} implies the full convergence of the sequence $(u^\epsilon)$, leading to homogenization.

        \medskip
        
        In general---for instance, when the Hamiltonian is not translation-invariant---the monotonicity condition~\eqref{barHmon} is not sufficient to compare discontinuous viscosity sub- and supersolutions of~\eqref{eqnonloceff}. This impedes the use of the half-relaxed limits method of Barles and Perthame~\cite{Bp87}, an efficient tool used in the first and second-order setting to obtain homogenization. Though partial results on the comparison principle for non-translation-invariant fractional equations exist, they are either related to L\'evy-Ito type operators, or require some regularity of the sub- or supersolution being compared; see e.g.~\cite{barles2008second, GMS19, Bct19, mou2015uniqueness}. Nevertheless, condition~\eqref{barHmon} by itself is sufficient to compare classical solutions of~\eqref{eqnonloceff}. This motivates the study of $C^{\sigma+\alpha}$-estimates for the effective equation.
        
       	Once $C^{\sigma+\alpha}$-estimates for~\eqref{eqnonloceff} are at hand, they can be employed to obtain the result on rate of convergence, following the approach presented by Camilli and Marchi in~\cite{camilli2009rates} for second-order convex, fully nonlinear equations. The argument relies on the study of the difference $u^\epsilon - \bar u$ using the classical doubling variables method in the viscosity solution's theory. Since $u^\epsilon$ and $\bar u$ solve different problems, the usual penalization procedure requires the consideration of the \textit{corrector function} $w$ solving~\eqref{cp} to modulate the difference among the original problem and the effective one. Actually, we need to consider the solution $w_\lambda$ of the approximating problem~\eqref{dp} below (the so-called \textsl{vanishing discount problem}), which satisfies $w_\lambda \to w$ as $\lambda \to 0$, using the last convergence in an adequate regime depending on $\epsilon$. This is the content of our main Theorem~\ref{teoconv} below, from which homogenization is a byproduct of such a  rate of convergence.

 \medskip
        
        The mentioned regularity estimates are made possible in our setting by representation formulas for the effective Hamiltonian $\bar H$ that have an interest in their own right. These formulas are contained in Theorem \ref{corbarH}, which in turn is a direct consequence of Proposition \ref{propc}. The proof of this last proposition follows the lines of Ishii, Mitake and Tran \cite{IMT}, which characterizes the ergodic constant associated to second-order Hamilton-Jacobi-Bellman problems.
        This allows us to conclude an ``average" formula for $\bar H$ in analogy to the characterization of the (multiplicative) principal eigenvalue in the celebrated paper of Donsker and Varadhan~\cite{DV76} for linear second-order equations; see also Armstrong~\cite{A09} for the fully nonlinear second-order case. 
        
                The problem of obtaining such a representation formula for an operator like $\bar{H}$ dates back to the work of Courrège \cite{courrege1965forme}, in which linear operators satisfying a \emph{global} comparison principle are characterized of being of \textit{Lèvy type}. More recently, this study has been extended to the nonlinear setting in \cite{guillen2019min,guillen2020min}, in even greater generality than is necessary for the problem at hand (see also \cite{schwab2010periodic, guillen36neumann}). Nevertheless, these formulas are not sufficient to conclude comparison principles and/or regularity estimates for equation~\eqref{eqnonloceff}, at least in the most general setting presented in this paper. 
                
                We finish mentioning that our representation formula is valid in greater generality, for instance when the nonlocal operator also depends on the slow variable $x$. However, the conclusion regarding regularity for the effective problem and its subsequent application on the rate of convergence in the homogenization procedure requires further analysis in this case, and we do not pursue in this direction here. See the discussion about this issue in Subsection~\ref{rmk_xDependence}.

\medskip

The paper is organized as follows. In Section~\ref{sec1} we provide the assumptions and present the main results of the paper. In Section~\ref{sec_eff} we prove the representation formula for the effective Hamiltonian. In Section~\ref{secdp} we provide relevant estimates of the discount problem, the approximating equation that defines the effective Hamiltonian. Finally, in Section~\ref{secrate} we provide the rate of convergence for our homogenization problem.
        
    \section{Assumptions and Main results}
    \label{sec1}
    
          Throughout this note, $H$ will have the form~\eqref{H}. We assume $f \in C(\RN\times\TN\times\Theta; \RN)$ and $l \in C(\RN\times\TN\times\Theta; \R)$ and there exists a constant $C$ such that 
    \begin{equation}\label{lip_data}
    |Df^\theta|, |Dl^{\theta}|\ \leq C, \quad\textrm{for all }\theta\in \Theta,
    \end{equation}
    for some $C>0$, in the viscosity sense; in other words, both functions are Lipschitz continuous with respect to both variables, uniformly in $\theta$.
    
    The nonlocal operator $L$ has the form~\eqref{nonloc_L}, with $A \in C(\TN \times S^{N - 1} \times \Theta; \mathbb{S}^N)$ satisfying
    \begin{equation}\label{holder_k}
    \|A^\theta\|_{C^\alpha(\mathbb{T}^N \times S^{N - 1})} \leq C_0
    \end{equation}
    for some $C_0>0$, uniformly with respect to $\theta$, and is uniformly elliptic in the sense that there exists constants $0 < \gamma \leq \Gamma$ such that
    \begin{equation}\label{A_elliptic_order}
    \Gamma^{-2/(N + \sigma)} \leq A^\theta \leq \gamma^{-2/(N + \sigma)}
    \end{equation}
    in the sense of matrices. 
    
    In particular, the family of kernels $\{ K^\theta \}$ satisfies 
    \begin{equation}\label{elliptic_order}
    \frac{\gamma}{|z|^{N+\sigma}} \leq K^\theta(y,z)\leq \frac{\Gamma}{|z|^{N+\sigma}} \quad\textrm{for all } y\in \mathbb{T}^N, z\in \RN\backslash\{0\},
    \end{equation}
    in addition to being $\mathbb{T}^N$-periodic in $y$.

\bigskip
    
        Now we present the first main result of this paper, which is a representation formula for $\bar H$. Given $L$ as above, define the set
            \begin{equation}\label{G0}
                \mathcal G_0 = \{ \phi \in C(\mathbb{T}^N \times \Theta) \ : \ \exists \ u \ \mbox{s.t.} \ \sup_{\theta \in \Theta} \{ -L^\theta_y u(y) - \phi_\theta(y) \} \leq 0 \ \mbox{for} \ y \in \mathbb{T}^N \},
            \end{equation}
        where the equation satisfied by $u$ is understood in the viscosity sense. As it can be seen in Lemma 2.8 in~\cite{IMT}, $\mathcal G_0$ is a nonempty convex cone with vertex at the origin. We also
        denote $\mathcal P$ the space of probability measures on $\mathbb{T}^N \times \Theta$, and consider the polar cone $\mathcal G_0'$ given by
            \begin{equation}\label{G0'}
                \mathcal G_0' := \left\{ \mu \in \mathcal P \ : \ \int \phi d\mu \geq 0 \ \mbox{for all} \ \phi \in \mathcal G_0 \right\}.
            \end{equation}

            \begin{theorem}\label{corbarH}
                Assume $\{ K \}$ has the form~\eqref{defK}, and $f,l$ are continuous with respect to all its variables, and that~\eqref{lip_data}, \eqref{holder_k} and~\eqref{A_elliptic_order} hold. Let $\mathcal G_0'$ be defined in~\eqref{G0'}. Then, for all $x, p \in \R^N$ and $\phi \in C^{\sigma + \iota}$, the function $\bar H$ defined in~\eqref{cp} has the form
                   	\begin{equation*}
                       	\overline{H}(x, p, \phi) = \sup_{\mu \in \mathcal{P} \cap \mathcal G_0'} \left\{ -\overline{L}^\mu \phi(x) - \bar{f}^\mu(x)\cdot p - \bar{l}^\mu(x)\right\},
                   	\end{equation*}
                where, for each $\mu \in \mathcal P$ we have denoted
                   	\begin{align*}
                       	\overline{L}^\mu \phi(x) & := \int_{\RN} [\phi(x+z) - \phi(x)] \overline{K}^\mu(z) \,dz; \quad \bar K^\mu(z) = \int_{\mathbb{T}^N \times \Theta} K^\theta(y, z) d\mu, \\
                       	\bar f^\mu (x) & := \int_{\mathbb{T}^N \times \Theta} f(x,y,\theta) d\mu, \\
                       	\bar l^\mu (x) & := \int_{\mathbb{T}^N \times \Theta} l(x,y,\theta) d\mu.
                   	\end{align*}
            \end{theorem}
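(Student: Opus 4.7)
The strategy is to translate the cell-problem definition of $\bar H(x,p,\phi)$ into an abstract ergodic-constant characterization in terms of the cone $\mathcal G_0$, and then use convex duality (bipolar theorem) to recover the claimed supremum over $\mathcal P \cap \mathcal G_0'$.

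First I would fix $(x,p,\phi)$ with $\phi \in C^{\sigma+\iota}$ and introduce the ``source function''
\[
g_{x,p,\phi}(y,\theta) := L^\theta_y \phi(x) + f^\theta(x,y)\cdot p + l^\theta(x,y),
\]
which is continuous on $\TN\times\Theta$ thanks to the regularity of $\phi$, \eqref{holder_k}, and the continuity of $f,l$. In this notation, the cell problem \eqref{cp} becomes $\sup_\theta\{-L^\theta_y w(y) - g_{x,p,\phi}(y,\theta)\}=c$, and by the very definition of $\mathcal G_0$ in \eqref{G0}, the shifted function $g_{x,p,\phi}+c$ lies in $\mathcal G_0$ precisely when the cell problem admits a viscosity subsolution at level $c$. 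The plan is therefore to invoke Proposition \ref{propc} (the nonlocal analogue of the characterization in \cite{IMT}), yielding the variational identity
\[
\bar H(x,p,\phi)\;=\;\inf\bigl\{c\in\R\,:\,g_{x,p,\phi}+c\in\mathcal G_0\bigr\}.
\]
One inequality is immediate from the existence of a solution to \eqref{cp}; the reverse is the substantive content of Proposition \ref{propc}, obtained via the vanishing-discount scheme and compactness arguments for subsolutions of the parametric family.

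Next I would apply the bipolar theorem. Since $\mathcal G_0$ is a convex cone (cf.\ Lemma 2.8 of \cite{IMT}) and is closed in $C(\TN\times\Theta)$ by stability of viscosity subsolutions of the nonlocal HJB family under uniform convergence, one has $\psi\in\mathcal G_0$ iff $\int \psi\,d\mu\ge 0$ for every $\mu\in\mathcal G_0'$. Applied to $\psi = g_{x,p,\phi}+c$, and using that every $\mu\in\mathcal G_0'$ is a probability measure (so $\int c\,d\mu=c$), this rewrites as
\[
c\;\ge\;-\int_{\TN\times\Theta} g_{x,p,\phi}(y,\theta)\,d\mu(y,\theta)\quad\text{for all } \mu\in\mathcal P\cap\mathcal G_0'.
\]
Taking the infimum over admissible $c$ and interchanging the $\mu$-integration with the principal-value integral defining $L^\theta_y\phi(x)$ via Fubini (justified because $\phi\in C^{\sigma+\iota}$ makes the integrand absolutely integrable uniformly in $(y,\theta)$) delivers
\[
\bar H(x,p,\phi)\;=\;\sup_{\mu\in\mathcal P\cap\mathcal G_0'}\bigl\{-\bar L^\mu\phi(x)-\bar f^\mu(x)\cdot p-\bar l^\mu(x)\bigr\},
\]
which is the claimed formula.

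The hardest step is not the duality argument but rather Proposition \ref{propc} itself, i.e.\ the variational identity for the ergodic constant: the ``$\ge$'' direction forces any admissible $c'$ to dominate $c$, which in turn demands the compactness/regularity estimates for the discount-problem family---this is where the $\sigma$-stable ellipticity bound \eqref{elliptic_order} and the H\"older regularity of $A$ play their role. A secondary technical point is the closedness of $\mathcal G_0$: while the standard half-relaxed-limits stability for nonlocal HJB inequalities of this form handles uniform convergence of the data, one must be slightly careful to ensure that the correctors $u$ remain equibounded and equicontinuous along the approximating sequence so as to extract a limit subsolution. Once these two pieces are in place, the representation formula follows in a purely algebraic manner as sketched above.
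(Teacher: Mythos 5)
Your reduction of the theorem to the identity $\bar H(x,p,\phi)=\inf\{c\in\R : g_{x,p,\phi}+c\in\mathcal G_0\}$ is exactly the characterization \eqref{caract1} used inside the proof of Proposition \ref{propc}, and your final Fubini step is fine provided you first use the symmetry $K^\theta(y,z)=K^\theta(y,-z)$ to rewrite $L^\theta_y\phi(x)$ with the second-order difference $\phi(x+z)+\phi(x-z)-2\phi(x)$: as literally stated, the first-order integrand $[\phi(x+z)-\phi(x)]K^\theta(y,z)$ is \emph{not} absolutely integrable near $z=0$ when $\sigma>1$, so the interchange needs this standard symmetrization (the averaged kernel $\bar K^\mu$ inherits symmetry and the bounds \eqref{elliptic_order}). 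The genuine gap is in your duality step. You replace the paper's argument by the bipolar theorem, which requires $\mathcal G_0$ to be a \emph{closed} convex cone in $C(\TN\times\Theta)$, and your justification of closedness (stability of viscosity subsolutions under uniform convergence together with equiboundedness/equicontinuity of the witnesses $u$) does not work as stated: membership in $\mathcal G_0$ only provides the one-sided viscosity inequalities $-L^\theta_y u\leq \phi_\theta$, and such subsolutions carry no equicontinuity or compactness, so you cannot extract a limiting witness along an approximating sequence $\phi_n\to\phi$. The paper quotes Lemma 2.8 of \cite{IMT} only for ``nonempty convex cone with vertex at the origin,'' so closedness is not available off the shelf either. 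The paper's proof of the second identity in \eqref{twocharact} avoids this issue altogether: it writes $\inf_{\mu\in\mathcal G_0'}\int\ell\,d\mu=\inf_{\mu\in\mathcal P}\sup_{\phi\in\mathcal G_0}\int(\ell-\phi)\,d\mu$, swaps $\inf$ and $\sup$ by Sion's theorem (which needs weak-$*$ compactness of $\mathcal P$, convexity of $\mathcal G_0$, and linearity of the pairing, but \emph{not} closedness of $\mathcal G_0$), and then rules out a strict inequality by testing with Dirac measures and contradicting \eqref{caract1}.

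The gap is repairable, though not by the compactness argument you sketch. From \eqref{caract1} itself one sees that $\mathcal G_0=\{\phi\in C(\TN\times\Theta): c(\phi)\leq 0\}$, where $c(\phi)$ denotes the ergodic constant of $\sup_\theta\{-L^\theta_y w-\phi_\theta\}=c$ (the corrector witnesses one inclusion, \eqref{caract1} gives the other), and the map $\phi\mapsto c(\phi)$ is $1$-Lipschitz for the sup norm (any witness for $(\phi_1,\tilde c)$ is a witness for $(\phi_2,\tilde c+\|\phi_1-\phi_2\|_\infty)$; alternatively argue through the discount problems \eqref{dp}); hence $\mathcal G_0$ is closed. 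You should also record that every element of the full polar cone of $\mathcal G_0$ in the space of signed measures is a nonnegative measure (because $\mathcal G_0$ contains all nonnegative continuous functions, via the witness $u\equiv 0$), so that restricting the bipolar identity to probability measures, i.e.\ to $\mathcal P\cap\mathcal G_0'$, loses nothing. With these two points supplied, your bipolar route is a correct alternative to the paper's Sion-based proof. Note finally that if you are willing to quote Proposition \ref{propc} as stated (its second characterization), the theorem follows in one line exactly as in the paper, and the entire duality argument you develop becomes redundant; what you have really done is re-prove that characterization by a different convex-duality mechanism.
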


        \medskip
  
        In spite of $\bar H$ not being translation-invariant, the representation formula obtained in Theorem \ref{corbarH} allows us to establish comparison --and thus, homogenization-- through a superposition of available regularity results which allows us to ensure $C^{\sigma + \alpha}$ estimates for the effective problem, see Corollary \ref{eff_reg}. 
  
        \medskip
     	
        As mentioned earlier, the regularity estimates for the effective problem which stem from Theorem \ref{corbarH} lead to the main result of this paper.
              \begin{theorem}\label{teoconv}
                             	Under the assumptions of Theorem~\ref{corbarH}, there exist constants $C>0$ and $\bar{\alpha}\in (0,1)$, depending on the data and parameters of equation \eqref{eq}, but not on $\epsilon$, such that
                                 	\begin{equation*}
                                     	|u^\epsilon(x) - u(x)| \leq C \epsilon^{\bar{\alpha}} \quad \text{for all }x\in \RN, \epsilon\in (0,1).
                                 	\end{equation*}
                             	where $u^\epsilon$ and $u$ are the solutions of \eqref{eq} and \eqref{eqnonloceff}, respectively.
              \end{theorem}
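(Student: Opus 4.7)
The plan is to combine the $C^{\sigma+\bar\alpha}$-regularity of $\bar u$ provided by Corollary~\ref{eff_reg} with a perturbed test function built from the discounted corrector $w_\lambda$ studied in Section~\ref{secdp}, following the scheme of Camilli-Marchi~\cite{camilli2009rates}. Because $\bar H$ is not translation-invariant, comparison for merely continuous sub/supersolutions of~\eqref{eqnonloceff} is unavailable; the $C^{\sigma+\bar\alpha}$-regularity is used precisely to promote $\bar u$ to a classical solution, at which point the argument can be closed via a doubling variables scheme that tracks the error introduced by freezing the corrector parameters.

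Fix $x_0\in\RN$ (eventually a near-maximizer of $u^\epsilon-\bar u$), and let $w_\lambda^{x_0}$ solve the discounted cell problem~\eqref{dp} with parameters $(x_0, D\bar u(x_0), \bar u)$; set $\check w_\lambda^{x_0}(y) := w_\lambda^{x_0}(y) - w_\lambda^{x_0}(0)$. From Section~\ref{secdp} I would extract the uniform-in-$\lambda$ estimates $\|D\check w_\lambda^{x_0}\|_\infty \leq C$, $\|\check w_\lambda^{x_0}\|_\infty \leq C$, and $\|\lambda w_\lambda^{x_0} + \bar H(x_0, D\bar u(x_0), \bar u)\|_\infty \leq C\lambda^\tau$ for some $\tau>0$. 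The candidate perturbed profile is
\[
V^\epsilon_{x_0}(x) := \bar u(x) + \epsilon^\sigma \check w_\lambda^{x_0}(x/\epsilon),
\]
which differs from $\bar u$ by $O(\epsilon^\sigma)$. The $-(N+\sigma)$-homogeneity of the kernels yields the scaling identity $\epsilon^\sigma L^\theta_{x/\epsilon}[\check w_\lambda^{x_0}(\cdot/\epsilon)](x) = L^\theta_{x/\epsilon}\check w_\lambda^{x_0}(x/\epsilon)$, so plugging $V^\epsilon_{x_0}$ into the $\epsilon$-equation and using the cell problem to cancel the $L\check w_\lambda$-term yields, after bounding the discrepancies by (a) $|L^\theta_y \bar u(x) - L^\theta_y \bar u(x_0)|\leq C|x-x_0|^{\bar\alpha}$ from $\bar u\in C^{\sigma+\bar\alpha}$, (b) Hölder continuity of $D\bar u$, and (c) Lipschitz continuity of $f^\theta, l^\theta$ in the slow variable, an inequality of the form
\[
V^\epsilon_{x_0}(x) + H(x, x/\epsilon, DV^\epsilon_{x_0}(x), V^\epsilon_{x_0}) \geq -C\bigl(|x-x_0|^{\bar\alpha} + \epsilon^{\sigma-1} + \lambda^\tau + \epsilon^\sigma\bigr)
\]
for $x$ in a neighborhood of $x_0$. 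The analogous construction with $-\check w_\lambda^{x_0}$ provides a local approximate subsolution from above.

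To transform these local approximate sub/supersolutions into the global rate, I would apply the doubling variables method with the perturbation incorporated. Fixing $x_0=x_0(\epsilon)$ a near-maximizer of $u^\epsilon-\bar u$, consider
\[
\Phi(x,y) := u^\epsilon(x) - \bar u(y) - \frac{|x-y|^2}{2\eta} - \delta(|x|^2+|y|^2) - \epsilon^\sigma\check w_\lambda^{x_0}(x/\epsilon),
\]
with $\eta>0$ to be optimized and $\delta>0$ a small compactness parameter sent to zero. At a maximizer $(\bar x, \bar y)$, standard penalization gives $|\bar x-\bar y|=O(\eta^{1/2})$ and $|\bar x - x_0|$ small. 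Combining the viscosity subsolution property of $u^\epsilon$ at $\bar x$ (tested by the appropriate frozen section of $\Phi$) with the classical supersolution property of $\bar u$ at $\bar y$ (valid since $\bar u$ is a classical solution of~\eqref{eqnonloceff}) and subtracting produces
\[
\sup_{\RN}(u^\epsilon - \bar u) \leq C\bigl(\eta^{\bar\alpha/2} + \lambda^\tau + \epsilon^{\sigma-1} + \epsilon^\sigma\bigr) + o_\delta(1).
\]
Sending $\delta\to 0$ and optimizing $\eta=\eta(\epsilon),\lambda=\lambda(\epsilon)$ as suitable powers of $\epsilon$ (together with the symmetric direction) produces the rate $\epsilon^{\bar\alpha}$ for some, possibly smaller, $\bar\alpha\in(0,1)$.

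The principal obstacle I anticipate is controlling the gradient contribution $\epsilon^{\sigma-1}\|Dw_\lambda^{x_0}\|_\infty$, which becomes dangerous as $\sigma$ approaches $1$ and requires the uniform-in-$\lambda$ Lipschitz estimate for $w_\lambda^{x_0}$ developed in Section~\ref{secdp}. A closely related subtle point is the Hölder estimate $|L^\theta_y \bar u(x) - L^\theta_y \bar u(x_0)|\leq C|x-x_0|^{\bar\alpha}$, which uses the full $C^{\sigma+\bar\alpha}$ regularity of $\bar u$ and, through Corollary~\ref{eff_reg}, the representation formula of Theorem~\ref{corbarH}; without this nonlocal regularity the perturbed test function scheme cannot close, which is precisely why the representation formula constitutes a necessary intermediate step for the rate.
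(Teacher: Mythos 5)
Your overall strategy is the same as the paper's (the Camilli--Marchi scheme: $C^{\sigma+\alpha}$ regularity of $u$ from Corollary~\ref{eff_reg}, the discounted corrector $w_\lambda$ with the estimates of Lemma~\ref{lemma_dp_bounds}, a penalized maximization, and an optimization in the auxiliary parameters), but there is a genuine gap at the one step where the rate is actually produced: the localization of the maximum point near the point where the corrector's parameters are frozen. In your doubled functional $\Phi$ nothing anchors the maximizer $\bar x$ to $x_0$: the penalization $\delta(|x|^2+|y|^2)$ only guarantees attainment, and a near-maximizer $x_0$ of $u^\epsilon-\bar u$ gives no control on where the maximum of the \emph{perturbed} functional sits, since $\sup(u^\epsilon-\bar u)$ can be nearly attained at points far from $x_0$. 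Your error terms (a)--(c) are all of size $|\bar x-x_0|^{\bar\alpha}$, $|\bar x-x_0|$, so if $|\bar x-x_0|$ is not small they are only $O(1)$ and the scheme yields nothing; the assertion ``$|\bar x-x_0|$ small'' is precisely what must be engineered, not assumed. The paper does this by a two-step maximization: first maximize $\varphi(x)=u^\epsilon-u-\epsilon^\sigma w_\lambda(\nicefrac{x}{\epsilon};[u](x))-b\psi(x)$ with \emph{moving} corrector parameters to get $\hat x$, then freeze the parameters at $\hat x$ and add a second penalization $c\,\psi(x-\hat x)$ with the tuned constant $c=\frac{\epsilon^\sigma}{\lambda\tau^{2-\alpha'}}\,2C(1+2M)$; the admissibility of this choice (i.e.\ that the new maximum $\tilde x$ stays in $B_\tau(\hat x)$) rests on the Lipschitz-in-parameters estimate of Lemma~\ref{lemma_dp_bounds}(c), which carries a factor $\lambda^{-1}$, combined with Proposition~\ref{prop_nonlocalDiff}. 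The cost $c$ then enters the final error budget, $u^\epsilon-u\le M_2(\lambda+\epsilon^{\sigma-1}+\epsilon^\sigma\lambda^{-1}\tau^{-(2-\alpha')}+\tau^{\alpha'})$, and it is exactly the optimization of this extra term ($\lambda=\epsilon^{\sigma\alpha'/(2+\alpha')}$, $\tau=\epsilon^{\sigma/(2+\alpha')}$) that yields the exponent $\bar\alpha=\frac{\sigma\alpha'}{2+\alpha'}$. Your claimed bound $C(\eta^{\bar\alpha/2}+\lambda^\tau+\epsilon^{\sigma-1}+\epsilon^\sigma)$ omits the localization cost entirely, so the subsequent optimization and the claimed rate are not substantiated.

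Two smaller remarks. First, doubling variables is superfluous here: since Corollary~\ref{eff_reg} makes $u$ a classical solution, one can (as the paper does) work with a single variable and evaluate $u$, $w_\lambda$ and $\psi$ pointwise, avoiding the extra $\eta$-error. Second, your idea of using $\check w_\lambda=w_\lambda-w_\lambda(0)$, which is bounded uniformly in $\lambda$ by Lemma~\ref{lemma_dp_bounds}(b), is a reasonable variant that removes the $\epsilon^\sigma\lambda^{-1}$ amplitude term; but if you freeze the parameters at a near-maximizer of $u^\epsilon-\bar u$ you must still add a penalization anchored at that point, choose its size against the tail of the comparison (this is where a parameter $\tau$ and its cost $\sim\epsilon^\sigma\tau^{-2}$ reappear), and only then optimize---so the structure of the paper's argument cannot be bypassed, only reorganized.
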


        For completeness, we include also a (much easier) result on the rates of convergence for the homogenization of \eqref{eq} when dependence on the slow variable---and possibly the gradient of the solution---is dropped. The corresponding results in the first- and second-order setting are contained in \cite{capuzzo2001rate} and \cite{camilli2009rates}, respectively.
            \begin{theorem}\label{teo_simple_rate}
                  Under the assumptions of Theorem \ref{teoconv},
                      \begin{enumerate}[(i)]
                          \item if $f^\theta\equiv 0$ and $l^{\theta}$ depends only on $y\in \mathbb{T}^N$ (i.e., $l^{\theta}(x,y)=l^{\theta}(x))$ for all $\theta\in \Theta$, then there exists a constant $C>0$ such that
                              \begin{equation*}
                                  |u(x) -u^\epsilon(x)| \leq C \epsilon^\sigma \quad \textrm{for all }x\in \RN,\ \epsilon\in (0,1).
                              \end{equation*}
                          \item if $f^\theta$ and $l^{\theta}$ depend only on $y\in \mathbb{T}^N$ (i.e., $f^\theta(x,y)= f^\theta(x)$, $l^{\theta}(x,y)=l^{\theta}(x))$ for all $\theta\in \Theta$, then there exists a constant $C>0$ such that
                              \begin{equation*}
                                  |u(x) -u^\epsilon(x)| \leq C \epsilon^{\sigma-1} \quad \textrm{for all }x\in \RN,\ \epsilon\in (0,1).
                              \end{equation*}
                      \end{enumerate}
            \end{theorem}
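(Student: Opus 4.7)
The key simplification is that under the hypotheses of either (i) or (ii), the effective equation \eqref{eqnonloceff} admits a constant solution. Indeed, since $f, l$ depend only on $y \in \TN$, so do the averaged data $\bar{f}^\mu, \bar{l}^\mu$ from Theorem~\ref{corbarH}. For any constant $c$, $\overline{L}^\mu c \equiv 0$, so the representation formula gives $\overline{H}(x, 0, c) = \sup_{\mu \in \mathcal{P}\cap\mathcal{G}_0'}\{-\bar{l}^\mu\} =: c_0$, which is independent of $x$ and $c$. Consequently $\bar{u} \equiv -c_0$ is a constant solution of \eqref{eqnonloceff}, and by Theorem~\ref{teoconv} it is the homogenization limit of $u^\epsilon$.

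Following the perturbed test function method, I would next solve the reduced cell problem, finding $w \in C(\TN)$ and $\lambda \in \R$ such that $\sup_\theta\{-L^\theta_y w(y) - l^\theta(y)\} = \lambda$ on $\TN$, via the construction in~\cite{BChCI}, which forces $\lambda = c_0$. The $C^{\sigma+\alpha}(\TN)$ regularity theory for translation-invariant uniformly elliptic nonlocal equations, combined with $\sigma > 1$, gives that $w$ is classically $C^1$ with $\|Dw\|_\infty < \infty$. Define the approximate solution $v^\epsilon(x) := \bar{u} + \epsilon^\sigma w(x/\epsilon)$. The homogeneity of $K^\theta$ in $z$ yields the scaling $L^\theta_{x/\epsilon}[w(\cdot/\epsilon)](x) = \epsilon^{-\sigma}(L^\theta_y w)(x/\epsilon)$, and because $\bar{u}$ is constant,
\begin{equation*}
L^\theta_{x/\epsilon} v^\epsilon(x) = (L^\theta_y w)(x/\epsilon), \qquad Dv^\epsilon(x) = \epsilon^{\sigma-1}(Dw)(x/\epsilon).
\end{equation*}

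Substituting $v^\epsilon$ into \eqref{eq} and using the cell problem identity, in case (i) we obtain
\begin{equation*}
v^\epsilon(x) + \sup_\theta\{-L^\theta_{x/\epsilon}v^\epsilon(x) - l^\theta(x/\epsilon)\} = \bar{u} + \epsilon^\sigma w(x/\epsilon) + c_0 = \epsilon^\sigma w(x/\epsilon) = O(\epsilon^\sigma).
\end{equation*}
In case (ii) the extra drift contribution $-f^\theta(x/\epsilon)\cdot Dv^\epsilon(x)$ is bounded by $C\epsilon^{\sigma-1}\|f\|_\infty\|Dw\|_\infty$, so by the elementary estimate $|\sup_\theta A^\theta - \sup_\theta(A^\theta + B^\theta)| \leq \sup_\theta |B^\theta|$, the residual is $O(\epsilon^{\sigma-1})$. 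Since the error terms are constant in $\theta$ and constants are annihilated by $L^\theta$ and $D$, the functions $v^\epsilon \pm C\epsilon^\sigma$ (resp. $v^\epsilon \pm C\epsilon^{\sigma-1}$) are classical super- and sub-solutions of \eqref{eq}. The standard comparison principle for \eqref{eq}---valid due to uniform nonlocal ellipticity and the strong monotonicity provided by the $+u^\epsilon$ term---yields $\|u^\epsilon - v^\epsilon\|_\infty \leq C\epsilon^\sigma$ (resp. $C\epsilon^{\sigma-1}$), and combining with $\|v^\epsilon - \bar{u}\|_\infty \leq \epsilon^\sigma\|w\|_\infty$ gives the stated rates.

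The only genuinely delicate point is the rate degradation in case (ii): under the perturbed test function ansatz $v^\epsilon = \bar{u} + \epsilon^\sigma w(x/\epsilon)$, the scaling $Dv^\epsilon = \epsilon^{\sigma-1}Dw(x/\epsilon)$ makes the gradient term dominate the correction itself as soon as $f \not\equiv 0$, which explains the loss of one power of $\epsilon$. Everything else amounts to the scaling computation for the fractional operator and verification that the corrector inherits enough regularity from the Hölder data, both of which are standard in this setting.
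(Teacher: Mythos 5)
Your proposal is correct and follows essentially the same route as the paper: identify the constant effective solution, solve the reduced cell problem for a corrector $w(y)$, form the perturbed ansatz $\bar u + \epsilon^\sigma w(\nicefrac{x}{\epsilon})$ shifted by a constant of size $\epsilon^\sigma$ (resp.\ $\epsilon^{\sigma-1}$, the loss coming from the drift acting on $Dv^\epsilon = \epsilon^{\sigma-1}Dw$), and conclude by comparison, exactly as in the paper's proof. The only minor imprecision is attributing the corrector's $C^1$ regularity to ``translation-invariant'' theory, since $K^\theta(y,z)$ and $l^\theta(y)$ depend on $y$; the needed regularity instead follows from the rough-kernel estimates already invoked in Lemma~\ref{lemma_dp_bounds}(\ref{dp_sig+al}), which is how the paper justifies it.
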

                 
    \section{Representation formula: Proof of Theorem~\ref{corbarH}}\label{sec_eff}
        Theorem \ref{corbarH} will be obtained as a consequence of Proposition \ref{propc} below, which provides two characterizations of the ergodic constant $c$ in~\eqref{ergodic}. We present the result in a slightly more general setting: we consider a measurable function $K: \TN \times \R^N \times \Theta \to \R$ such that $K^\theta(y,z) = K^\theta(y,-z)$ for all $y \in \TN, z \in \R^N, \theta \in \Theta$, and satisfying the ellipticity condition~\eqref{elliptic_order}. We consider 
        \begin{equation}\label{opcp}
        L_y^\theta u(y) = \mathrm{P.V.} \int_{\R^N} [u(y + z) - u(y)] K^\theta(y,z)dz.
        \end{equation}
        
        Let $\ell \in C(\mathbb{T}^N \times \Theta)$ and consider the following ergodic problem: find a pair $(\psi, c) \in C(\TN) \times \R$ solving the equation
                \begin{equation}\label{ergodic}
                    F(\psi, y) := \sup_{\theta} \{ - L^\theta_{y} \psi(y) - \ell^{\theta}(y) \} = c \quad \mbox{in} \ \mathbb{T}^N,
                \end{equation}
                in the viscosity sense. Under mild assumptions on $K$, it is possible to prove the existence of a unique constant $c \in \R$ for which problem~\eqref{ergodic} has a viscosity solution, and this solution is in $C^{\sigma + \alpha}$ for some $\alpha \in (0,1)$, see~\cite{CSAnnals, S}. These assumptions are related with the continuity on the data and ellipticity conditions that ensure comparison principles and regularity, see for instance~\cite{BChCI, BLT}.
        
        \medskip
        
              \begin{proposition}\label{propc}
                  Assume $\ell \in C(\mathbb{T}^N \times \Theta)$, $L$ has the form~\eqref{opcp} with $K$ symmetric in $z$, satisfying~\eqref{elliptic_order} and such that the ergodic problem~\eqref{ergodic} has a solution $(\psi, c)$. Let $\mathcal G_0'$ defined as in~\eqref{G0} with these operators $L$. Then, $c$ can be characterized as follows
                      \begin{equation}\label{twocharact}
                          \begin{split}
                              c & = -\inf_{\mu \in \mathcal P} \sup_{\psi \in C^{\sigma + \iota}} \int_{\mathbb{T}^N \times \Theta} \{L^\theta_y \psi (y) + \ell^{\theta}(y) \} d\mu \\
                              & = -\inf_{\mu \in \mathcal G_0'} \int_{\mathbb{T}^N \times \Theta} \ell^{\theta}(y) d\mu.
                          \end{split}
                      \end{equation}
              \end{proposition}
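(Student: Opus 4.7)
The plan is to prove both equalities in~\eqref{twocharact} by reducing them to a single core identity, namely $c = -\inf_{\mu \in \mathcal G_0'} \int \ell^\theta \,d\mu$, and to derive the first equality by a direct manipulation of the inner supremum. Throughout I would follow the general scheme of Ishii--Mitake--Tran, exploiting that $\mathcal G_0$ is a convex cone in the Banach space $C(\mathbb{T}^N \times \Theta)$ and that $\mathcal G_0'$ is precisely its polar in the space of Radon measures.

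\textbf{Step 1 (Hahn--Banach half).} To prove $c = -\inf_{\mu \in \mathcal G_0'} \int \ell^\theta \,d\mu$, I would use that $c$ is characterized as the unique constant for which~\eqref{ergodic} admits a subsolution, equivalently as the infimum of those $c'$ for which $F(\psi,y) \leq c'$ has a subsolution. For the inequality $c \geq -\inf$, the ergodic solution $\psi$ itself witnesses that $\ell^\theta + c \in \mathcal G_0$ (taking $u = \psi$ in the definition~\eqref{G0}), so testing against any $\mu \in \mathcal G_0'$ yields $\int (\ell^\theta + c)\,d\mu \geq 0$. For the reverse inequality I would fix $\epsilon > 0$: the maximality/uniqueness of $c$ forces $\ell^\theta + c - \epsilon \notin \mathcal G_0$, and the Hahn--Banach separation theorem applied to the convex cone $\mathcal G_0$ and this external point produces a nontrivial Radon measure $\mu$ with $\int(\ell^\theta + c - \epsilon)\,d\mu < 0 \leq \int \phi\,d\mu$ for every $\phi \in \mathcal G_0$. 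Since $\mathcal G_0$ contains all nonnegative continuous functions (choose $u \equiv 0$), $\mu$ is automatically nonnegative with $\mu(\mathbb{T}^N \times \Theta) > 0$, and can be normalized to a probability in $\mathcal G_0'$; letting $\epsilon \to 0$ closes the estimate.

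\textbf{Step 2 (minimax half).} Rather than invoking a Sion-type theorem, I would analyze the inner supremum in $c_1$ directly. For $\mu \in \mathcal G_0'$ the key remark is that, for every $\psi \in C^{\sigma + \iota}$, the function $(y,\theta) \mapsto L^\theta_y \psi(y)$ lies in $\mathcal G_0$: the witness $u = -\psi$ makes the supremum in~\eqref{G0} vanish identically. Consequently $\int L^\theta_y \psi \,d\mu \geq 0$, and replacing $\psi$ by $-\psi$ forces equality with $0$, so $\sup_{\psi} \int \{L^\theta_y \psi + \ell^\theta\}\,d\mu = \int \ell^\theta\,d\mu$. For $\mu \in \mathcal P \setminus \mathcal G_0'$ there exists $\phi \in \mathcal G_0$ with $\int \phi\,d\mu < 0$; the associated witness $u$ satisfies $L^\theta_y u + \phi^\theta \geq 0$ (in a suitable integrated sense), giving $\int L^\theta_y u\,d\mu > 0$, and the scalings $\psi = tu$ with $t \to \infty$ drive the inner sup to $+\infty$. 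Thus the outer infimum over $\mathcal P$ in $c_1$ is attained only on $\mathcal G_0'$, and equals $c_2$.

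\textbf{Main obstacle.} The delicate step is the $\mu \notin \mathcal G_0'$ case in Step 2: the witness $u$ furnished by the definition of $\mathcal G_0$ is only a viscosity subsolution and need not lie in $C^{\sigma + \iota}$, yet the integral $\int L^\theta_y u\,d\mu$ and the scaling $tu$ must be made sense of within the class of test functions. I plan to bypass this by a standard sup-convolution / mollification of $u$ combined with the $C^{\sigma + \alpha}$ regularity results from~\cite{CSAnnals, S} quoted just after~\eqref{ergodic}, so that the integrated inequality $\int L^\theta u\,d\mu \geq -\int \phi\,d\mu > 0$ can be applied to an admissible $\psi$ after an arbitrarily small loss in the right-hand side. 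A secondary but easier point is to verify in Step 1 that the linear functional produced by Hahn--Banach is positive and of finite mass; this will again follow from the fact that $\mathcal G_0$ contains the positive cone and the constant function $1$, as recorded in Lemma~2.8 of~\cite{IMT}.
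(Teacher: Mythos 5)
Your Step 1 is essentially correct and is a genuinely different route from the paper's for the second identity in \eqref{twocharact}: the paper derives both characterizations from the subsolution characterization of $c$ (its formula (3.5), justified via the parabolic comparison principle) through two applications of Sion's minimax theorem, whereas you obtain $c=-\inf_{\mu\in\mathcal G_0'}\int\ell\,d\mu$ directly by Hahn--Banach separation of the point $\ell+c-\epsilon$ from the cone $\mathcal G_0$. This does work: $\mathcal G_0$ contains the nonnegative cone (witness $u\equiv 0$), hence has nonempty interior in $C(\TN\times\Theta)$, so separation applies without any closedness of $\mathcal G_0$; the cone property forces the separating functional into the dual cone, positivity and normalization put it in $\mathcal G_0'$, and letting $\epsilon\to 0$ closes the bound. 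What it buys is that one Sion argument is replaced by a soft separation argument; what it does not buy is independence from the key analytic input, since you still need exactly the paper's characterization of $c$ as the infimum of constants admitting viscosity subsolutions, which rests on the parabolic comparison principle and the smoothness of the ergodic solution.

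Step 2, however, has a genuine gap at the point you flag, and the proposed repair would not close it. For $\mu\in\mathcal P\setminus\mathcal G_0'$ the witness $u$ in \eqref{G0} is only a continuous viscosity subsolution, and to force the inner supremum to $+\infty$ you must produce an admissible $\psi\in C^{\sigma+\iota}$ with $\int L^\theta_y\psi\,d\mu>0$. The $C^{\sigma+\alpha}$ estimates of \cite{CSAnnals,S} do not help: they concern solutions of the two-sided equation, while $u$ satisfies only one-sided inequalities (a subsolution of each linear equation $-L^\theta u\le\phi^\theta$), and such functions need not be better than continuous. Sup-convolution yields a semiconvex subsolution whose pointwise inequality holds only at points of twice differentiability, i.e.\ off a Lebesgue-null set, and a general $\mu\in\mathcal P$ may be singular and charge exactly that set; mollification does not commute with $L^\theta_y$ because the kernels depend on $y$, and the resulting commutator is controlled only by a seminorm of $u$ that you do not have. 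The gap is in fact avoidable without any of this: the ergodic solution $\psi$ is an admissible test function (by the $C^{\sigma+\alpha}$ regularity the paper already invokes) and satisfies $L^\theta_y\psi(y)+\ell^\theta(y)\ge -c$ pointwise, so $\sup_{\psi}\int\{L^\theta_y\psi+\ell^\theta\}\,d\mu\ge -c$ for every $\mu\in\mathcal P$; combined with your Step 2(a) (which is correct) and Step 1 this gives the first identity with no analysis of $\mu\notin\mathcal G_0'$ at all. Note that the paper's own argument is arranged precisely so that the viscosity witness $u$ is used only through the pointwise, Dirac-measure form of the definition of $\mathcal G_0$, never integrated against a possibly singular measure; as written, your treatment of the case $\mu\notin\mathcal G_0'$ would fail.
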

     	
     	\begin{proof}
     	Using the comparison principle for the associated parabolic problem (see Proposition 3.2 in~\cite{tabet2010large}), which is possible in our case due to the smoothness of the solution to~\eqref{ergodic}, it is possible to prove that $c$ can be characterized as
       	    \begin{equation}\label{caract1}
             	c = \inf \{ \tilde c \in \R : \exists \ \psi \ \mbox{s.t.} \ F(\psi, y) \leq \tilde c \ \mbox{in} \ \mathbb{T}^N \},
         	\end{equation}    
        where the inequality inside the $\inf$ is understood in the viscosity sense. From here, we claim that $c$ meets the value
            \begin{equation*}
                \inf_{\psi \in C^{\sigma + \iota}} \sup_{y \in \mathbb{T}^N} F(\psi, y).
            \end{equation*}
      
        Indeed, let us denote the latter value by $\bar c$. Since there exists a smooth solution to~\eqref{ergodic}, we clearly have $\bar c \leq c$. On the other hand, if $\bar c < c$, there exists $\epsilon > 0$ small and $\psi$ smooth such that
            \begin{equation*}
                F(\psi, y) \leq \bar c + \epsilon < c \quad \mbox{for all} \ y \in \mathbb{T}^N,
            \end{equation*}
        which contradicts the characterization~\eqref{caract1}.
      
     	Using this last formula for $c$ and the continuity of the map 
         	$$
             	(y, \theta) \mapsto -L^\theta_{y} \psi(y) - \ell^{\theta}(y)
         	$$ 
     	when $\psi$ is smooth, we conclude that
         	\begin{equation*}
             	c = -\sup_{\psi \in C^{\sigma + \iota}} \inf_{\mu \in \mathcal P} \int_{\mathbb{T}^N \times \Theta} \{L^\theta_{y} \psi (y) + \ell^{\theta}(y) \} d\mu.
         	\end{equation*}
     	Note that for each $\psi$ smooth, the map 
       	    $$
             	\mu \mapsto \int_{\mathbb{T}^N \times \Theta} \{L^\theta_{y} \psi(y) + \ell^{\theta}(y) \} d\mu
         	$$ 
        is a bounded linear map when the $*$-weak topology is considered on $\mathcal P$. On the other hand, given $\mu \in \mathcal P$, the map
            $$
                \psi \mapsto \int_{\mathbb{T}^N \times \Theta} \{L^\theta_{y} \psi(y) + \ell^{\theta}(y) \} d\mu
            $$
        is affine (hence, concave). Then, using Sion's Theorem~\cite{Sion}, we get that $c$ can be written as
            \begin{equation}\label{caract2}
                -c =  \inf_{\mu \in \mathcal P} \sup_{\psi \in C^{\sigma + \iota}} \int_{\mathbb{T}^N \times \Theta} \{L^\theta_{y} \psi(y) + \ell^{\theta}(y) \} d\mu,
            \end{equation} 
        which is the first characterization in~\eqref{twocharact}.
      
        For the second, we denote
            \begin{equation*}
                \bar c = \inf_{\mu \in \mathcal G_0'} \int_{\mathbb{T}^N \times \Theta} \ell^{\theta}(y) d\mu.
            \end{equation*}
        Note that if $\psi \in C^{\sigma + \iota}(\mathbb{T}^N)$, the function $(y, \theta) \mapsto -L^\theta_{y} \psi(y)$ is in $\mathcal G_0(x)$. Then, by~\eqref{caract2} we immediately see that $-c \leq \bar c$.
      
        On the other hand, using that $\mathcal G_0$ is a cone with vertex at the origin, for each $\mu \notin \mathcal G_0'$ we have $\inf_{\phi \in \mathcal G_0} \int \phi d\mu = -\infty$; whereas, if $\mu \in \mathcal G_0'$ we have $\inf_{\phi \in \mathcal G_0} \int \phi d\mu = 0$. Thus, we can write
            \begin{align*}
                \bar c ={}& \inf_{\mu \in \mathcal P} \left\{ \int_{\mathbb{T}^N \times \Theta} \ell^\theta(y) d\mu - \inf_{\phi \in \mathcal G_0} \int_{\mathbb{T}^N \times \Theta} \phi^\theta(y) d\mu \right\}\\
                 ={}& \inf_{\mu \in \mathcal P} \sup_{\phi \in \mathcal G_0} \left\{ \int_{\mathbb{T}^N \times \Theta} \ell^\theta - \phi^\theta d\mu \right\}.
            \end{align*}
      
        Invoking again Sion's Theorem, we conclude that
            \begin{equation*}
                \bar c =  \sup_{\phi \in \mathcal G_0} \inf_{\mu \in \mathcal P} \left\{ \int_{\mathbb{T}^N \times \Theta} \ell^\theta - \phi^\theta d\mu \right\}.
            \end{equation*}
     	Then, if by contradiction we assume that $-c < \bar c$, there exists $\epsilon > 0$ small and $\phi \in \mathcal G_0$ such that
         	\begin{equation*}
             	-c + \epsilon < \int_{\mathbb{T}^N \times \Theta} \ell^\theta - \phi^\theta d\mu \quad \mbox{for all} \ \mu \in \mathcal P.
         	\end{equation*}
     	
         Since the \emph{Dirac deltas} are in $\mathcal P$, in particular we have
         	\begin{equation*}
             	-c + \epsilon < \ell^{\theta}(y) - \phi^\theta(y) \quad \mbox{for all} \ (y, \theta) \in \mathbb{T}^N \times \Theta.
         	\end{equation*}    
        Thus, by definition of $\mathcal G_0$, there exists $u$ such that
            \begin{equation*}
                - L^\theta_{y} u(y) -\ell^{\theta}(y) < c - \epsilon \quad \mbox{for all} \ (y, \theta) \in \mathbb{T}^N \times \Theta,
            \end{equation*}
        but this contradicts the characterization of $c$ in~\eqref{caract1}. This concludes the second characterization in~\eqref{twocharact}.
   	\end{proof}

    \begin{proof}[Proof of Theorem \ref{corbarH}]
        We again assume that $K^\theta$ has the form \eqref{defK}---i.e., neither $K^\theta$ nor the sets $\mathcal{G}_0, \mathcal{G}_0'$ depend on the the slow variable $x$.
        
        As described in the introduction, $\bar{H}$ is defined as the ergodic constant in \eqref{cp}. Therefore, for given $x, p \in \R^N$ and $\phi \in C_b^2(\RN)$, we apply the second characterization in \eqref{twocharact} with $\ell^\theta(y) = L^\theta_y \phi(x) + f^\theta(x,y)\cdot p + l^{\theta}(x,y)$, and conclude by integrating over $\mathbb{T}^N \times \Theta$.
    \end{proof}
    
    \medskip
    
    As we mentioned in the Introduction, we have the following regularity result for the solution of the effective problem~\eqref{eqnonloceff}.
        \begin{corollary}\label{eff_reg}
          	Assume hypotheses of Corollary~\eqref{corbarH} holds, and assume further that $f, l$ are H\"older continuous in the slow variable, uniformly with respect to the rest of the others, Then, every continuous solution $u$ of \eqref{eqnonloceff} is in $C^{\sigma+\alpha}$, and there exist $C>0$ and $\alpha>0$, depending only on universal constants, such that $\|u\|_{\sigma+\alpha} \leq C$. Moreover, this solution is unique in the class of smooth functions.
        \end{corollary}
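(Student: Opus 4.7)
The plan is to exploit the representation formula of Theorem \ref{corbarH} in order to recast \eqref{eqnonloceff} as a convex nonlocal Bellman equation to which the Caffarelli–Silvestre regularity theory applies, and then to derive uniqueness directly from the degenerate ellipticity \eqref{barHmon} combined with the strict monotonicity provided by the $+\bar u$ term.

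The first step is to verify that the averaged family $\{\bar L^{\mu}\}_{\mu\in\mathcal{P}\cap\mathcal{G}_0'}$ falls into a standard regularity class. Since $K^\theta(y,\cdot)$ is symmetric and satisfies the two-sided bound \eqref{elliptic_order} uniformly in $(y,\theta)$, integrating against $\mu\in\mathcal P$ preserves symmetry of $\bar K^\mu$ and yields the same ellipticity bounds $\gamma|z|^{-(N+\sigma)}\le \bar K^\mu(z)\le \Gamma|z|^{-(N+\sigma)}$. Consequently the operators $\bar L^\mu$ are translation-invariant symmetric linear operators of order $\sigma$ belonging to the class $\mathcal{L}_0(\gamma,\Gamma,\sigma)$ of Caffarelli–Silvestre. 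The assumed Hölder continuity of $f,l$ in the slow variable, uniformly in $y$ and $\theta$, transfers by Jensen/Fubini to the coefficients $\bar f^\mu,\bar l^\mu$, with Hölder seminorms bounded independently of $\mu$. Hence \eqref{eqnonloceff} is equivalent to the Isaacs/Bellman equation
\begin{equation*}
 \bar u(x)+\sup_{\mu\in \mathcal P\cap\mathcal G_0'}\bigl\{-\bar L^\mu \bar u(x)-\bar f^\mu(x)\cdot D\bar u(x)-\bar l^\mu(x)\bigr\}=0,
\end{equation*}
with convex nonlinearity, uniformly elliptic symmetric kernels, and Hölder coefficients.

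The second step is to invoke the interior $C^{\sigma+\alpha}$ regularity theory for such convex fully nonlinear integro-differential equations, in the spirit of Caffarelli–Silvestre (and its extensions to operators with $x$-dependent coefficients via freezing). In this step I would first use the $L^\infty$ bound for $\bar u$ (which follows from the comparison with constants obtained from $\|l\|_\infty$ and the coercivity provided by $\bar u$), then apply the Hölder estimate for extremal operators to get $\bar u \in C^\beta$ for some $\beta>0$, and finally iterate Schauder-type bounds to upgrade regularity to $C^{\sigma+\alpha}$ with norm controlled by the data. The freezing is tractable here because $\bar L^\mu$ does not depend on $x$, so only the drift and potential carry $x$-dependence and they do so with uniform Hölder norm. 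This is the step where I expect the main technical work: carefully matching the hypotheses of the available regularity results (e.g. \cite{CSAnnals,S,BLT}) to our representation-formula class.

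Finally, uniqueness among smooth solutions follows from a standard comparison argument using \eqref{barHmon}. If $u_1,u_2\in C^{\sigma+\iota}$ are two solutions and $M=\sup_{\R^N}(u_1-u_2)$ is attained at a point $x_0$, then $Du_1(x_0)=Du_2(x_0)=:p_0$ and $\tilde u_1:=u_1-M$ satisfies $\tilde u_1\ge u_2$ on $\R^N$ with equality at $x_0$. Since each $\bar L^\mu$ annihilates additive constants, $\bar H(x_0,p_0,u_1)=\bar H(x_0,p_0,\tilde u_1)$, and \eqref{barHmon} gives $\bar H(x_0,p_0,\tilde u_1)\le \bar H(x_0,p_0,u_2)$. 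Subtracting the equations at $x_0$ yields $M+\bar H(x_0,p_0,u_1)-\bar H(x_0,p_0,u_2)=0$, hence $M\le 0$; the symmetric argument gives $M=0$. The attainment of the maximum can be handled either by a standard doubling/penalization in $\R^N$ using the boundedness of the solutions, or by reducing to $\mathbb T^N$ via the periodic structure of the coefficients.
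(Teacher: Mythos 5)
Your route is essentially the paper's. The whole role of Theorem \ref{corbarH} there is exactly your first step: the averaged kernels $\bar K^\mu$ are symmetric, independent of the slow variable, and inherit the bounds \eqref{elliptic_order} uniformly in $\mu$, while $\bar f^\mu,\bar l^\mu$ inherit the H\"older modulus in $x$ uniformly in $\mu$, so \eqref{eqnonloceff} is a convex, uniformly elliptic nonlocal Bellman equation with translation-invariant kernels, and regularity follows by superposing available results. The paper is merely more specific about the bootstrap you leave as ``iterate Schauder-type bounds'': Lipschitz estimates from \cite{BChCILip}, then $C^{1,\alpha}$ from \cite{caffarelli2009regularity} --- this is precisely what turns the drift term $\bar f^\mu\cdot D\bar u$ into a H\"older datum --- and only then the $C^{\sigma+\alpha}$ estimates of \cite{CSAnnals} or \cite{S}. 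That intermediate chain is the ``main technical work'' you flag, and it is where the gradient term must be handled before the Evans--Krylov/Serra step; otherwise your regularity argument coincides with the paper's.

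In the uniqueness step there are two slips. First, with $M=\sup_{\RN}(u_1-u_2)$ you have $\tilde u_1:=u_1-M\le u_2$, not $\ge u_2$; and, as written, your chain ($\bar H(x_0,p_0,\tilde u_1)\le\bar H(x_0,p_0,u_2)$ together with the subtracted equations) yields $M\ge 0$, the opposite of what you need. The correct application of \eqref{barHmon} takes $\varphi_1=u_2\ge\varphi_2=\tilde u_1$ with equality at $x_0$, giving $\bar H(x_0,p_0,u_2)\le\bar H(x_0,p_0,\tilde u_1)=\bar H(x_0,p_0,u_1)$, whence $M=\bar H(x_0,p_0,u_2)-\bar H(x_0,p_0,u_1)\le 0$; the symmetric argument then gives $M=0$. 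Second, you cannot ``reduce to $\mathbb{T}^N$'' to guarantee the supremum is attained: the effective coefficients $\bar f^\mu,\bar l^\mu$ (hence the solutions) are periodic only in the fast variable, which has been averaged out, not in $x$. Use instead the penalization in $\RN$ you mention, absorbing the small error with the strictly monotone zeroth-order term; with these corrections this is the standard classical-solution comparison the paper alludes to when it states that \eqref{barHmon} suffices to compare smooth solutions of \eqref{eqnonloceff}.
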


    The proof follows by available regularity results. First, $C^\alpha$ estimates for~\eqref{eq} which are uniform in $\epsilon$ (see Chang-Lara and D\'avila~\cite{ChLD16}), and the perturbed test function method, allows us to prove that each accumulation point of the sequence is a continuous viscosity solution to~\eqref{eqnonloceff}. Such a solution becomes Lipschitz continuous using the results of Barles, Chasseigne, Ciomaga and Imbert~\cite{BChCILip}, and from here we get it is $C^{1,\alpha}$ by Caffarelli and Silvestre~\cite{caffarelli2009regularity}. Finally, the $C^{\sigma+\alpha}$ estimates in~\cite{CSAnnals} or Serra~\cite{S} imply the desired higher-order regularity. This procedure is the content of the regularity estimates presented in Corollary~\ref{eff_reg}. We remark that such a procedure cannot be directly applied to~\eqref{eq} to obtain $C^{\sigma+\alpha}$ estimates independent of $\epsilon$, due to the high oscillatory phenomena as $\epsilon \to 0$ which deteriorates the continuity of the data in the equation.
      
        \subsection{A discussion on the dependency of $\bar H$ on the slow variable.}\label{rmk_xDependence}
    We notice here that the arguments of Proposition \ref{propc} and Theorem \ref{corbarH} apply equally to the more complicated case of $x$-dependent kernels, for instance, for kernels with the form
                    \begin{equation}\label{KxDep}
                            K^\theta(x,y,z) = |z^t A(x,y, \hat z) z|^{-(N + \sigma)/2},
                        \end{equation}
            with certain continuity assumptions on the slow variable $x$.
             In that case we still produce a representation formula, but in that case the new index set depends upon $x$, namely
                \begin{equation}\label{barHx}
                    \overline{H}(x, p, \phi) = \sup_{\mu \in \mathcal G_0'(x)} \left\{ -\overline{L}^\mu \phi(x) - \bar{f}^\mu(x)\cdot p - \bar{l}^\mu(x)\right\}.
                \end{equation}
                
                      	However, we notice that the presence of $\mathcal G_0'(x)$ in~\eqref{barHx} does not allow us to use the available regularity results to conclude an analogue of Corollary \ref{eff_reg}. 

            As the method to obtain rates of convergence in the following section relies on such estimates, we are unable to reach a conclusion in the setting imposed by~\eqref{KxDep}.
            
            Nevertheless, we highlight that the structure of~\eqref{barHx} is in accordance with phenomena arising, for instance, in first-order equations. For Hamiltonians of the form
               	\begin{equation*}
                   	H(x, y , p) = \sup_{\theta \in \Theta} \{ - f^\theta(x, y) \cdot p - l^{\theta}(x,y) \},
               	\end{equation*}
           	under certain controlability assumptions on the family of fluxes $f$, the associated effective Hamiltonian takes the form
               	\begin{equation*}
                   	\bar H(x, p) = \sup_{\mu \in Z(x)} \{ - f^\mu(x) \cdot p - l^\mu(x) \},
               	\end{equation*}
          	where for each $x \in \RN$, $Z(x)$ is a subset of the space of probability measures on $\TN \times \Theta$---the set of ``occupational measures", as described in~\cite{Terrone} and references therein. In this setting, some continuity on this set of indices $Z(x)$ (in the Hausdorff distance) is required in the standard doubling-variables method to get comparison principles for viscosity solutions. This makes possible to identify the effective problem with a optimal control problem with average trajectories and costs, see Bardi and Terrone in~\cite{BT15}.
          	
          	\medskip

             The case of nonlocal terms of order $\sigma\in (0,1]$ contains similar difficulties related to those of the previous point. First, we observe that the structure of the cell problem changes depending on the value of $\sigma$, as it is shown in \cite{Bct19}. For the critical value $\sigma=1$---the case which is studied in detail in \cite{ciomaga2020periodic}---the scaling of the problem makes the cell problem \eqref{cp} depends also on the gradient of the corrector $w$. Consequently, the inequality defining $\mathcal{G}_0$ in \eqref{G0} is now
                \begin{equation*}
                    \sup_{\theta \in \Theta} \{ -L^\theta_y v(y) - f^\theta(x,y)\cdot Dv(y) - \phi_\theta(y) \} \leq 0 \ \mbox{in} \ \mathbb{T}^N.
                \end{equation*}      
                
            This implies that the sets $\mathcal{G}_0$ and $\mathcal{G}_0'$ both depend on the slow variable $x$, even if $K^\theta$ has the simpler form \eqref{defK}, and we are again in the situation described in the previous remark.
     
  \section{Estimates for the Discount Problem}\label{secdp}
  
  In the study of the cell problem \eqref{cp} it is convenient to consider the approximation problem 
  \begin{equation}\label{dp}
  \lambda w_\lambda + \sup_{\theta} \{ -L^\theta_y w_\lambda(y) - L^\theta_y \phi(x) - f^\theta(x,y)\cdot p - l^{\theta}(x,y) \} = 0\quad\text{in }\mathbb{T}^N,
  \end{equation}
  for $\lambda>0$, commonly referred to as the \textit{vanishing discount problem:} it is well-known fact that under the assumptions considered here, the solvability of the eigenvalue problem~\eqref{cp} is obtained  in the passage to the limit as $\lambda \to 0^+$ in~\eqref{dp}. This problem plays a key role in our main result Theorem~\ref{teoconv}.
  
  To stress the dependence of $w_\lambda$ on $x, p$ and $\phi$, we write $w_\lambda = w_\lambda(y; x, p, \phi)$ for the solution of \eqref{dp} and, similarly, $w=w(y; x,p,\phi)$ for the solution of \eqref{cp}.
     	
     	\begin{lemma}\label{lemma_dp_bounds}
     		There exists a constant $C_1$ such that the solution of \eqref{dp} satisfies the following: for all $x, p \in \RN$ and $\phi\in C^{\sigma + \iota}(\RN)$
     		
     		\medskip
     		
     			\begin{enumerate}[(a)]
     				\item\label{dp_Linfty} $\|w_\lambda(\cdot; x, p, \phi)\|_\infty  \leq \lambda^{-1}C_1\left(1 + |p| + \|\phi\|_{\sigma+\iota}\right);$
     				
     				\medskip
     				
     				\item\label{dp_sig+al} for some $\alpha\in (0,1)$,
                      \begin{equation*}
                          \|w_\lambda(\cdot; x, p, \phi) - w_\lambda(0; x, p, \phi)\|_{C^{\sigma+\alpha}(\RN)} \leq C_1 \left(1 + |p| + \|\phi\|_{\sigma+\iota}\right)
                      \end{equation*}
                      
                      \medskip
                      
     				\item\label{dp_diff} $|D_p w_\lambda| \leq \lambda^{-1} C_1$, $|D_x w_\lambda| \leq \lambda^{-1} C_1(1 + |p| + \| \phi\|_{\sigma + \iota})$ (in the viscosity sense); and if $\phi_i \in C^{\sigma+\iota}(\RN)$ for $i=1, 2$, then 
                      \begin{equation*}
                          \|w_\lambda(\cdot; x, p, \phi_1) - w_\lambda(\cdot; x, p, \phi_2)\|_\infty \leq \lambda^{-1} C_1 \|\phi_1 - \phi_2\|_{\sigma+\iota}
                      \end{equation*}
                      
                      \medskip

     				\item\label{dp_van_disc_rate} for all $y\in \mathbb{T}^N, \quad |w_\lambda(y; x, p, \phi) + \overline{H}(x,p, \phi)| \leq \lambda C_1\left(1 + |p| + \|\phi\|_{\sigma+\iota}\right)$.
     			\end{enumerate}
     	\end{lemma}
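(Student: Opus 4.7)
All four estimates rest on the comparison principle for the discount problem \eqref{dp}---valid uniformly in $\lambda > 0$---combined with the interior $C^{\sigma+\alpha}$ regularity of \cite{caffarelli2009regularity, CSAnnals, S} for uniformly elliptic integro-differential equations of order $\sigma$. The preliminary observation used throughout is that, under \eqref{lip_data}--\eqref{A_elliptic_order}, the ``source term'' $g^\theta(y):=L^\theta_y\phi(x)+f^\theta(x,y)\cdot p+l^\theta(x,y)$ satisfies $\|g^\theta\|_\infty+\mathrm{Lip}_y(g^\theta)\leq C(1+|p|+\|\phi\|_{\sigma+\iota})$, uniformly in $\theta$; the $L^\theta_y\phi(x)$ piece is controlled by standard pointwise estimates for a nonlocal operator of order $\sigma<2$ acting on a $C^{\sigma+\iota}$ function, together with \eqref{elliptic_order}.

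\textbf{Steps (a) and (b).} The $L^\infty$ bound (a) is immediate: the constants $\pm\lambda^{-1}C_1(1+|p|+\|\phi\|_{\sigma+\iota})$ are respectively sub- and supersolutions of \eqref{dp} once $C_1$ dominates $\|g^\theta\|_\infty$, since $L^\theta_y$ annihilates constants; comparison delivers the bound. For (b), I first produce an oscillation bound on $w_\lambda$ \emph{uniform in $\lambda$}: treating $\lambda w_\lambda$ as a bounded right-hand side of size $C(1+|p|+\|\phi\|_{\sigma+\iota})$ via (a), a Harnack/weak Harnack inequality for viscosity solutions of uniformly elliptic nonlocal fully nonlinear equations (cf.\ \cite{CSAnnals}), applied on unit cubes covering $\mathbb{T}^N$, yields $\mathrm{osc}(w_\lambda)\leq C(1+|p|+\|\phi\|_{\sigma+\iota})$. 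The $C^{\sigma+\alpha}$ interior estimates of \cite{CSAnnals, S} applied to $w_\lambda-w_\lambda(0)$ on unit balls then conclude, using periodicity to control the nonlocal tails.

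\textbf{Step (c).} Each statement is a routine comparison between two instances of \eqref{dp} with perturbed parameters. Perturbing $p\mapsto p'$ changes the Hamiltonian by at most $\|f\|_\infty|p-p'|$ in $L^\infty$; perturbing $x\mapsto x'$ changes it by at most $C(1+|p|+\|\phi\|_{\sigma+\iota})|x-x'|$ (using \eqref{lip_data} and the Lipschitz dependence of $L^\theta_y\phi(x)$ on $x$ controlled via the $C^{\sigma+\iota}$-norm of $\phi$ and \eqref{elliptic_order}); and replacing $\phi_1$ by $\phi_2$ changes it by at most $C\|\phi_1-\phi_2\|_{\sigma+\iota}$. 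In each case, comparison applied to the difference yields $L^\infty$ bounds of order $\lambda^{-1}\times(\text{perturbation size})$, which translate into the Lipschitz bounds in $p$ and $x$ in the viscosity sense and the stated stability in $\phi$.

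\textbf{Step (d) and main obstacle.} For the vanishing-discount rate, I use the solution $w$ of the cell problem \eqref{cp} as a barrier: since $L^\theta_y$ annihilates constants, $w+\kappa/\lambda$ plugged into \eqref{dp} produces the residual $\kappa+\bar{H}(x,p,\phi)+\lambda w$. Choosing $\kappa=-\bar{H}-\lambda\max_{\mathbb{T}^N}w$ yields a subsolution and $\kappa=-\bar{H}-\lambda\min_{\mathbb{T}^N}w$ a supersolution, so comparison sandwiches $w_\lambda$ and gives $|\lambda w_\lambda(y)+\bar{H}(x,p,\phi)|\leq\lambda\,\mathrm{osc}(w)\leq\lambda C(1+|p|+\|\phi\|_{\sigma+\iota})$, the last inequality following from (b) passed to the limit $\lambda\to 0$. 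The main obstacle I anticipate is in (b): obtaining an oscillation estimate uniform in $\lambda$ requires carefully absorbing the $\lambda w_\lambda$ term into the right-hand side via (a) without dragging the $\lambda^{-1}$ factor into the final bound, and invoking the local $C^{\sigma+\alpha}$ estimate in a way that correctly accounts for the periodic extension and the nonlocal tails.
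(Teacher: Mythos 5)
Your proposal is correct in substance and coincides with the paper's proof of parts (a) and (c) (constants as barriers after bounding $|L^\theta_y \phi(x)|$ by $\|\phi\|_{\sigma+\iota}$; comparison of solutions shifted by constants proportional to the perturbation), but it takes genuinely different routes at the two delicate points. For the uniform-in-$\lambda$ oscillation bound behind (b), the paper argues by compactness and contradiction: it normalizes $w_{\lambda_k}-w_{\lambda_k}(0)$, uses the H\"older estimates of \cite{caffarelli2009regularity} to extract a limit solving $\sup_\theta\{-L^\theta_y \tilde w\}=0$ on $\mathbb{T}^N$, and concludes by the strong maximum principle; your Harnack-chain argument is a valid quantitative alternative, with two provisos you should make explicit: the nonlocal Harnack inequality (which is in \cite{caffarelli2009regularity}, not \cite{CSAnnals}) requires nonnegativity in all of $\R^N$, so it must be applied to $w_\lambda-\min_{\mathbb{T}^N}w_\lambda$ (legitimate by periodicity) and then chained over finitely many balls covering the torus; and your preliminary claim that the source $g^\theta$ is uniformly Lipschitz in $y$ is too strong and does not follow from \eqref{elliptic_order} alone---under \eqref{holder_k} the map $y\mapsto L^\theta_y\phi(x)$ is only uniformly $C^\alpha$, which is precisely what the paper's computation on $|K^\theta(y_1,z)-K^\theta(y_2,z)|$ establishes and is all that the $C^{\sigma+\alpha}$ estimates of \cite{S} require. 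For (d), the paper does not use the corrector at all: it bounds $\lambda\sup_y w_\lambda$ below by $-\bar H$ via a sup-characterization of the ergodic constant (dual to the characterization \eqref{caract1} of Section~\ref{sec_eff}) and then invokes the oscillation bound of (b); your route---comparing $w_\lambda$ in \eqref{dp} with the corrector of \eqref{cp} shifted by $\lambda^{-1}(-\bar H-\lambda\max w)$ and $\lambda^{-1}(-\bar H-\lambda\min w)$---is equally valid provided you record that a corrector with $\mathrm{osc}(w)\leq C(1+|p|+\|\phi\|_{\sigma+\iota})$ exists (e.g.\ as a limit of $w_\lambda-w_\lambda(0)$, using (b)) and is regular enough for the comparison to be unproblematic. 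In short, your version buys explicit constants in (b) and an elementary barrier argument in (d), at the price of needing the Harnack inequality with a covering argument and the existence/regularity of the corrector; the paper's version is softer in (b) and, in (d), independent of the corrector, relying instead on the ergodic-constant characterizations it has already proved.
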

      
      \begin{proof}

          \textit{(\ref{dp_Linfty})} From the structure of $H$, we infer that $C = \pm \lambda^{-1}C_1\left(1 + |p| + \|\phi\|_{C^{\sigma + \iota}(\RN)}\right)$ are respectively super- and subsolutions of \eqref{dp}. Indeed, it is immediate that
              \begin{equation*}
                  L^\theta_y C \equiv 0,\quad |f^\theta(x,y)\cdot p|\leq  C_1|p|, \quad |l^{\theta}(x,y)|\leq C_1,
              \end{equation*}
         for sufficiently large $C_1$. For the principal part, we have
              \begin{equation*}
                  |L^\theta_y \phi(x)| \leq   |L^\theta_y[B]\phi(x)| + |L^\theta_y[\RN\backslash B]\phi(x)| =: I_1 + I_2.
              \end{equation*} 
          Using the symmetry of $K^\theta$, we have
              \begin{align*}
                  I_1 ={}& \left|\int_{B} \left(\phi(x+ z) - \phi(x) - D\phi(x)\cdot z\right) K^\theta(y,z) \,dz \right| \\
                  ={}& \left|\int_{B} \left(\int_{0}^{1} D\phi(x + tz)\cdot z \,dt\right) - D\phi(x)\cdot z\ K^\theta(y,z) \,dz \right|\\
                  \leq{}& \int_{B} \left(\int_{0}^{1} |D\phi(x+tz) - D\phi(x)| |z| \,dt \right)\ K^\theta(y,z) \,dz\\
                  \leq{}& \int_{0}^{1} [\phi]_{1,\beta} |z|^{1+\beta} \ K^\theta(y,z) \,dz \leq C\Gamma [\phi]_{1,\beta}.
              \end{align*}
          On the other hand, it is easy to see that $I_2 \leq C\Gamma\|\phi\|_\infty$, and by combining both estimates we conclude.
          
          \textit{(\ref{dp_sig+al})} We must first establish that for all $\lambda, x, p$ and $\phi$ as above,
               \begin{equation}\label{dp_Linfty2}
                   \|w_\lambda(\cdot; x, p, \phi) - w_\lambda(0; x, p, \phi)\|_\infty \leq C_1 \left(1 + |p| + \|\phi\|_{\sigma+\iota}\right).
               \end{equation}
           Assume, on the contrary, that there exist sequences $(\lambda_k)_k$ and $\big((x_k, p_k, \phi_k)\big)_k$ such that $\lambda_k\to 0$  and $w_k = w_{\lambda_k}(\cdot; x_k, p_k, \phi_k)$ satisfies
               \begin{equation*}
                   \|w_k - w_k(0)\|_\infty \geq k \left(1 + |p_k|+ \|\phi_k\|_{\sigma+\iota}\right).
               \end{equation*}
           
           For $\eta_k =  \|w_k - w_k(0)\|_\infty^{-1}$,  we define $\tilde{w}_k = \eta_k(w_k - w_k(0))$ and note that $\tilde{w}_k$ satisfies $\tilde{w}_k(0)=0$, $\|\tilde{w}_k\|_\infty = 1$ and
               \begin{equation}\label{eq_k}
                   \lambda_k \tilde{w}_k + \lambda_k \eta_k \tilde{w}_k(0) + \sup_\theta\{-L^\theta_y \tilde{w}_k(y) - \tilde{l}^\theta_k(y)\} = 0,
               \end{equation}
           where $\tilde{l}^\theta_k(y)= L^\theta_y \phi_k(x_k) + f^\theta(x_k, y) + l^{\theta}(x_k, y)$. 
           
           By part \textit{(\ref{dp_Linfty})}---again using \eqref{lip_data}---we have that
               \begin{equation*}
                   \eta_k\lambda_k|\tilde{w}_k(0)| + \eta_k |\tilde{l}^\theta_k|_\infty \leq \frac{C}{k}.
               \end{equation*}
          for some $C>0$. We may thus apply the regularity results of \cite{caffarelli2009regularity} to \eqref{eq_k} to find that $(\tilde{w}_k)$ is bounded in $C^{\bar{\iota}}$ for some $\bar{\iota}>0$, uniformly with respect to $k$, and thus converges up to a subsequence to some $\tilde{w}\in C^{\bar{\iota}}(\mathbb{T}^N)$. Passing to the limit in \eqref{eq_k} in the viscosity sense, we find that $\tilde{w}$ is a solution of
               \begin{equation*}
                   \sup_{\theta} \left\{ -L^\theta_y \tilde{w}(y) \right\} = 0 \quad\textrm{in }\mathbb{T}^N.
               \end{equation*}
           Since $\tilde{w}$ is periodic, it achieves its maximum at some point; hence, it is constant by the strong maximum principle. This contradicts the fact that, as $\tilde{w}$ is a limit of $\tilde{w}_k$, we have $\tilde{w}(0) = 0$ and $\|\tilde{w}\|_\infty = 1$, thus establishing \eqref{dp_Linfty2}.
                    
           We now note that $v_\lambda = w_\lambda - w_\lambda(0)$ satisfies
              \begin{equation}\label{v_lambda_eq}
                  \lambda v_\lambda + \sup_\theta \{ -L^\theta_y v_\lambda(y) - \tilde{l}^\theta(y)\} = 0
               \end{equation}
           where $\tilde{l}^\theta(y)= L^\theta_y \phi(x) + f^\theta(x, y) + l^{\theta}(x, y)$, and using \eqref{dp_Linfty2} and the results of \cite{caffarelli2009regularity} we conclude that $\|v_\lambda\|_{C^\alpha}(\mathbb{T}^N) \leq C_1(1 + |p| + \|\phi\|_{\sigma+\iota})$ for some $\alpha>0$, taking a larger $C_1$ if necessary.
           
           We wish to conclude by applying the results of \cite{S} to \eqref{v_lambda_eq}. To this end, we now show that $\tilde{l}^\theta$ as defined above is uniformly bounded in $C^\alpha$.
           
           Let $y_i\in \mathbb{T}^N$, $i=1,2$, $z\in \RN\setminus\{0\}$. Given the assumptions \eqref{lip_data}, it is immediate that 
              \begin{equation*}
                  \|f^\theta(x, \cdot)\cdot p + l^{\theta}(x, \cdot)\|_{C^\alpha(\mathbb{T}^N)} \leq C_1(1 + |p|),
              \end{equation*}
           in fact for any $\alpha\in (0,1]$. To bound the nonlocal term appearing in $\tilde{l}^\theta$, we compute
              \begin{equation*}
                  |K^\theta(y_1,z) - K^\theta(y_2,z)| \leq \frac{\Gamma^2}{|z|^{2(N+\sigma)}} \left| \left(z^TA^\theta(y_1, \hat z)z\right)^{\frac{N+\sigma}{2}} - \left(z^TA^\theta(y_2, \hat z)z\right)^{\frac{N+\sigma}{2}} \right|.
              \end{equation*}
          Write $h(s):=s^{\frac{N+\sigma}{2}}$, $s_i:= z^TA^\theta(y_i, \hat z)z$, $i=1, 2$. Since $h$ is smooth and convex, we have
              \begin{equation*}
                  |h(s_1) - h(s_2)| \leq |s_1-s_2| \max\{ |h'(s_1)|, |h'(s_2)| \}.
              \end{equation*}
          Therefore,
              \begin{align*}
                  &\left| \left(z^TA^\theta(y_1, \hat z)z\right)^{\frac{N+\sigma}{2}} - \left(z^TA^\theta(y_2, \hat z)z\right)^{\frac{N+\sigma}{2}} \right| \\
                  &\quad \leq\left| z^T[A^\theta(y_1, \hat z)-A^\theta(y_2, \hat z)z]\right|\max\left\{ \left(z^TA^\theta(y_1, \hat z)z\right)^{\frac{N+\sigma-2}{2}}, \left(z^TA^\theta(y_2, \hat z)z\right)^{\frac{N+\sigma-2}{2}} \right\}\\
                  &\quad \leq C|y_1-y_2|^\alpha|z|^{N+\sigma},
              \end{align*}
          where we have used \eqref{elliptic_order} and \eqref{holder_k} for the last inequality. Combining these estimates we obtain
              \begin{align*}
                  &|L^\theta_{y_1} \phi(x) - L^\theta_{y_2} \phi(x)| \leq \int_{\RN} |\phi(x+z) - \phi(x)| |K^\theta(y_1, z) - K^\theta(y_2, z)| \,dz\\
                  &\quad \leq C|y_1-y_2|^\alpha \int_{\RN} |\phi(x+z) - \phi(x)| |z|^{N+\sigma}\,dz\\
                  &\quad \leq C|y_1-y_2|^\alpha \|\phi\|_{\sigma+\iota}.
              \end{align*}
          Here we have removed the principal value from each of the integrals defining $L^\theta_{y_1} \phi$ and $L^\theta_{y_2} \phi$ by using that $\phi\in C^{\sigma+\iota}$ and computing as in part \textit{(\ref{dp_Linfty})} of the lemma. The third inequality is similarly obtained.
          
          Again using the assumption \eqref{holder_k}, we apply the results of \cite{S} to equation \eqref{v_lambda_eq}, and with this we conclude.\\
  
          \textit{(\ref{dp_diff})} Let $w_i = w_i(\cdot; x_i, p_1, \phi_i)$ for $x_i, p_1\in \RN$, $\phi_i\in C^{\sigma+\iota}(\RN)$, $i=1, 2$. Using the assumptions on the structure of $H$, it is easy to see that $w_\pm:= w_1 \pm \lambda^{-1}\big[|x_1-x_2|(1 + |p_2| + \|\phi_1\|_{\sigma+ \iota}) + |p_1 - p_2| + \|\phi_1 - \phi_2\|\big]$ are respectively a super- and a subsolution of \eqref{dp} centered in $(x_2, p_2, \phi_2)$. The claim follows by comparison.\\

          \textit{(\ref{dp_van_disc_rate})} Adapting the arguments of \cite{camilli2009rates}, \cite{IMT}, we consider fixed $(x, p, \phi)$ and define
              \begin{equation*}
                  \Gamma_\lambda = \lambda \sup_y w_\lambda (y; x, p, \phi).
              \end{equation*}
          We first claim that 
             \begin{equation}\label{gamma_lam}
                 \Gamma_\lambda \geq -\bar{H}(x,p,\phi).
             \end{equation}
          Indeed, by noting that $w_\lambda$ satisfies
              \begin{equation*}
                  \Gamma_\lambda + \sup_\theta\left\{ -L^\theta_y w_\lambda(y) - L^\theta_y \phi(x) - f^\theta(x,y)\cdot p - l^{\theta}(x,y) \right\} \geq 0,
              \end{equation*}
          the claim follows from applying the following characterization of the ergodic constant: using the notation of Section \ref{sec_eff},
              \begin{equation*}
                  c = \sup \{ \tilde c \in \R : \exists \ \psi \ \mbox{s.t.} \ F(\psi, y) \geq \tilde c \ \mbox{in} \ \mathbb{T}^N \}.
              \end{equation*}
          This is in a sense dual to \eqref{caract1} and can be proved in exactly the same way.
         
          We then note that part \textit{(\ref{dp_sig+al})} of the lemma implies in particular that
              \begin{equation*}
                  \lambda|w_\lambda(y_1; x, p, \phi) - w_\lambda(y_2; x, p, \phi)| \leq \lambda C_1 \sqrt{N} (1 + |p| + \|\phi\|_{\sigma +\iota}),
              \end{equation*}
          given that $\sqrt{N}$ is the diameter of the unit hypercube in $\RN$. Plugging \eqref{gamma_lam} into the previous inequality, we find
              \begin{equation*}
                  \lambda w_\lambda (y; x, p, \phi) \geq -\bar{H}(x,p,\phi) - \lambda C_1 \sqrt{N} (1 + |p| + \|\phi\|_{\sigma +\iota}) \quad \textrm{for all } y\in \mathbb{T}^N.
              \end{equation*}
          The proof of the corresponding upper bound is similarly obtained.
      \end{proof}
      
      \medskip
      
      In the proof of Theorem \ref{teoconv} we will make use of the following estimate.
          \begin{proposition}\label{prop_nonlocalDiff}
              Let $x_1, x_2, z\in \RN$, $y\in \mathbb{T}^N$, $\phi\in C^{\sigma+\alpha}(\RN)$. For any $\alpha'<\alpha$, there exists a $C>0$ such that
                  \begin{equation}\label{princNonlocal_diff}
                      |L^\theta_y \phi(x_1) - L^\theta_y \phi(x_2)| \leq C|x_1-x_2|^{\alpha'}\|\phi\|_{\sigma+\alpha}.
                  \end{equation}
          \end{proposition}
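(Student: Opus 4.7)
The plan is to split the defining integral at the length scale $\rho := |x_1-x_2|$, using the symmetry of $K^\theta$ in $z$ to avoid principal-value issues. Writing $\delta_z\phi(x) := \phi(x+z)+\phi(x-z)-2\phi(x)$, the evenness of $K^\theta(y,\cdot)$ gives
\begin{equation*}
L^\theta_y\phi(x_1) - L^\theta_y\phi(x_2) = \frac{1}{2}\int_{\RN}[\delta_z\phi(x_1) - \delta_z\phi(x_2)]\,K^\theta(y,z)\,dz,
\end{equation*}
and I will bound this integral separately on $B_\rho$ and on its complement.

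On $B_\rho$ I discard the cancellation between $\delta_z\phi(x_1)$ and $\delta_z\phi(x_2)$ and use the Taylor-type bound $|\delta_z\phi(x)|\leq C\|\phi\|_{\sigma+\alpha}|z|^{\sigma+\alpha}$, a direct consequence of $D\phi$ being H\"older continuous of exponent $\sigma+\alpha-1\in(0,1)$ (obtained by writing $\delta_z\phi(x)=\int_0^1[D\phi(x+sz)-D\phi(x-sz)]\cdot z\,ds$). Combined with $K^\theta\leq \Gamma|z|^{-N-\sigma}$ from~\eqref{elliptic_order}, this produces a contribution of order $\|\phi\|_{\sigma+\alpha}\rho^{\alpha}$.

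On the complement of $B_\rho$ I instead exploit the $x$-regularity of $\delta_z\phi$: its spatial gradient $D_x\delta_z\phi(x)=D\phi(x+z)+D\phi(x-z)-2D\phi(x)$ is itself a second difference of $D\phi$, and is therefore bounded by $C\|\phi\|_{\sigma+\alpha}|z|^{\sigma+\alpha-1}$ (capped by $4\|D\phi\|_\infty$ for $|z|>1$). The mean value theorem in $x$ then yields $|\delta_z\phi(x_1)-\delta_z\phi(x_2)|\leq C\|\phi\|_{\sigma+\alpha}|x_1-x_2|\min(|z|^{\sigma+\alpha-1},1)$, and integrating against $K^\theta$ yields a contribution of order $\|\phi\|_{\sigma+\alpha}|x_1-x_2|\rho^{\alpha-1}$; the radial integral $\int_\rho^\infty r^{\alpha-2}\,dr$ converges at infinity since $\alpha<1$, and the $|z|>1$ tail converges because $\sigma>1$.

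Summing the two pieces and setting $\rho=|x_1-x_2|$ produces the desired bound of order $\|\phi\|_{\sigma+\alpha}|x_1-x_2|^{\alpha}$; the mild weakening to any $\alpha'<\alpha$ stated is convenient for absorbing the logarithmic factors that would appear at the boundary of the admissible exponents (e.g.\ when $\alpha\to 1^-$ the constant $(1-\alpha)^{-1}$ from the far integral diverges, and when $\sigma+\alpha=2$ the near integral picks up a $\log$). The main care in turning this into a proof is the bookkeeping on the annulus $\rho<|z|<1$ where the bound on $D_x\delta_z\phi$ transitions between its two regimes, together with checking that all constants are traceable to $\Gamma$ and $\|\phi\|_{\sigma+\alpha}$ alone and hence uniform in $\theta$.
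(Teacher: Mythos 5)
Your argument is correct, and it takes a genuinely different route from the paper's. The paper works with the first-difference increments $\delta=|\phi(x_1+z)-\phi(x_1)-(\phi(x_2+z)-\phi(x_2))|$, proves the two elementary bounds $\delta\le[\phi]_{1,\beta}\,|x_1-x_2|^{\beta}|z|$ and $\delta\le[\phi]_{1,\beta}\,|z|^{\beta}|x_1-x_2|$ (with $\beta=\sigma+\alpha-1$), and then interpolates $\delta=\delta^{s}\delta^{1-s}$ so as to integrate a single bound of the form $|x_1-x_2|^{\alpha-\nu(1-\beta)}|z|^{\sigma+\nu(1-\beta)}$ over all of $\RN$ at once; the loss from $\alpha$ to $\alpha'<\alpha$ is exactly the price of that exponent bookkeeping. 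You instead symmetrize first, using the evenness of $K^{\theta}(y,\cdot)$ to rewrite $L^{\theta}_y$ with second differences $\delta_z\phi$, and split at the scale $\rho=|x_1-x_2|$: near the origin you use $C^{\sigma+\alpha}$-smoothness in $z$, far away the Lipschitz-in-$x$ bound for $\delta_z\phi$ coming from the fact that $D_x\delta_z\phi$ is a second difference of $D\phi$. This buys two things. First, it treats the small-$|z|$ singularity cleanly: since $\sigma>1$, the first differences are only $O(|z|)$ and are not absolutely integrable against $K^{\theta}$ near the origin, so some cancellation or second-order information is indispensable there, and your symmetrization supplies it directly (this is the delicate point that the paper's one-shot interpolation is designed to get around). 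Second, your computation actually gives the full exponent, $|L^{\theta}_y\phi(x_1)-L^{\theta}_y\phi(x_2)|\le C\|\phi\|_{\sigma+\alpha}|x_1-x_2|^{\alpha}$ for $|x_1-x_2|\le 1$, which is stronger than the stated $\alpha'<\alpha$; the log-absorption role you assign to $\alpha'$ is not really needed. Two small points to record when writing it up: your bound $|\delta_z\phi(x)|\le C\|\phi\|_{\sigma+\alpha}|z|^{\sigma+\alpha}$ uses $\sigma+\alpha<2$ (so that $D\phi\in C^{\sigma+\alpha-1}$ with exponent in $(0,1)$), the same regime in which the paper's proof implicitly operates since it uses $[\phi]_{1,\beta}$ with $\beta=\sigma+\alpha-1$; and the case $|x_1-x_2|\ge 1$ should be dispatched beforehand by the uniform bound $|L^{\theta}_y\phi|\le C\|\phi\|_{\sigma+\alpha}$, which follows from \eqref{elliptic_order} as in Lemma \ref{lemma_dp_bounds}(a), so that all constants depend only on $\Gamma$, $\sigma$, $\alpha$ and are uniform in $\theta$ and $y$.
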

      
          \begin{proof}
              Write
                  \begin{equation*}
                      \delta := |\phi(x_1 + z) - \phi(x_1) - (\phi(x_2 + z) - \phi(x_2))|
                  \end{equation*}
              As (in particular) $\phi\in C^1$, we have
                  \begin{align*}
                      \delta={}& \left|\int_{0}^{1} \left(D\phi(x_1 + tz) - D\phi(x_2 + tz)\right) \cdot z \,dt \right|\\
                      \leq{}& \int_{0}^{1} |D\phi(x_1 + tz) - D\phi(x_2 + tz)| |z| \,dt \leq \int_{0}^{1} [\phi]_{1,\beta} |x_1-x_2|^\beta |z| \,dt\\
                      \leq{}& [\phi]_{1,\beta}\, |x_1-x_2|^\beta |z|.
                  \end{align*}
              Also, by rearranging the terms,
                  \begin{align*}
                      \delta={}& |\phi(x_1 + z) - \phi(x_2 + z) - (\phi(x_1)  - \phi(x_2))|\\
                      \leq{}& \int_{0}^{1} |D\phi(x_2 + z + t(x_1 - x_2)) - D\phi(x_2 + t(x_1 - x_2))| |x_1-x_2| \,dt\\
                      \leq{}& [\phi]_{1,\beta}\,|z|^\beta  |x_1-x_2|.
                  \end{align*}
              Hence
                  \begin{equation*}
                      \delta = \delta^s\delta^{1-s} \leq [\phi]_{1,\beta}\, |x_1-x_2|^{s\beta + 1-s} |z|^{s + \beta(1-s)}.
                  \end{equation*}
              Setting $s = \frac{1-\alpha}{1-\beta} + \nu$ for some $\nu>0$, we have
                  \begin{align*}
                      &|L^\theta_y \phi(x_1) - L^\theta_y \phi(x_2)|\\
                      &\quad\leq \int_{\RN} |\phi(x_1 + z) - \phi(x_1) - (\phi(x_2 + z) - \phi(x_2))| K^\theta(y,z) \,dz\\
                      &\quad\leq  [\phi]_{1,\beta}\, |x_1-x_2|^{\alpha-\nu(1-\beta)} \int_{\RN} |z|^{\sigma + \nu(1-\beta)} K^\theta(y,z)\, dz\\
                      &\quad\leq C [\phi]_{1,\beta}|x_1-x_2|^{\alpha-\nu(1-\beta)}.
                  \end{align*}
              We thus obtain \eqref{princNonlocal_diff} taking a sufficiently small $\nu$, noting $C$ depends on $\alpha':=\alpha-\nu(1-\beta)$ and universal constants.
          \end{proof}
  
  \section{Rate of convergence: Proof of Theorem~\ref{teoconv}.}\label{secrate}
     	
  We now provide the proof of Theorem~\ref{teoconv}.
  
     	\begin{proof}
     		The proof follows that of Theorem 2.1 in \cite{camilli2009rates}. Crucially, we will employ the following bound, consequence of Corollary \ref{eff_reg}:
              \begin{equation}\label{nonlocBounds}
                  \|u\|_{C^{\sigma+\alpha}(\RN)} \leq M.
              \end{equation}
          Let $M_0, b>0$ to be chosen, and $\psi\in C^\infty(\RN)$ a radial function, nondecreasing with respect to $|x|$, such that
     			\begin{equation*}
     				\psi(x) = \left\{
     					\begin{array}{ll}
     						|x|^2, &\text{if }|x|\leq 1,\\
     						M_0, & \text{if }|x|\geq 2.
     					\end{array}
     				\right.
     			\end{equation*}
          We immediately note that 
              \begin{equation}\label{penalBounds}
                  |D\psi|,\ |L^\theta_{\nicefrac{\tilde{x}}{\epsilon}} \psi|\leq M_1
              \end{equation}
          where $M_1$ is independent of $\epsilon$.
              
     		Writing $w_\lambda (\cdot; [u](x)) = w_\lambda(\cdot; x, Du(x), u)$ for short, we define
     			\begin{equation*}
     				\varphi(x) = u^\epsilon(x) - u(x) - \epsilon^\sigma w_\lambda(\nicefrac{x}{\epsilon}; [u](x)) - b\psi(x),
     			\end{equation*}
  	   and note that for sufficiently large $M_0$, $\varphi$ attains a \textit{global} maximum at some $\hat{x}\in B_2$.
     		
     		For $c>0$, define 
     			\begin{equation*}
     				\tilde{\varphi}(x) = u^\epsilon(x) - u(x) - \epsilon^\sigma w_\lambda(\nicefrac{x}{\epsilon}; [u](\hat{x})) - b\psi(x) - c\psi(x-\hat{x}).
     			\end{equation*}
     		Given $\tau>0$, we claim that $c$ can be chosen so that $\tilde{\varphi}$ attains a global maximum $\tilde{x}\in B_\tau(\hat{x})$. Indeed, set		
     			\begin{equation*}
     				c=\frac{\epsilon^\sigma}{\lambda\tau^{2-\alpha'}}2C (1 + 2M)
     			\end{equation*}
     		for $C>C_1$, where $C_1$ and $\alpha'$ are as in Lemma \ref{lemma_dp_bounds} and Proposition \ref{prop_nonlocalDiff}, respectively, and let $x\in \RN\backslash B_\tau(\hat{x})$. By construction we have $\tilde{\varphi}(\hat{x}) = \varphi(\hat{x}) \geq \varphi(x)$, hence
     			\begin{align*}
     				\tilde{\varphi}(\hat{x}) - \tilde{\varphi}(x) &{}\geq \varphi(x) - \tilde{\varphi}(x) = -\epsilon^\sigma\left[w_\lambda(\nicefrac{x}{\epsilon}; [u](x)) - w_\lambda(\nicefrac{x}{\epsilon}; [u](\hat{x})) \right] + c\psi(x-\hat{x}).
     			\end{align*}        	
     		Assume $\tau<1$ (it will eventually be taken small). If $\tau\leq |x-\hat{x}| < 1$ we have 
     			\begin{equation*}
     				c\psi(x-\hat{x}) = c|x-\hat{x}|^2 = \frac{\epsilon^\sigma}{\lambda\tau^{2-\alpha'}}2C (1 + 2M)|x-\hat{x}|^2> \frac{\epsilon^\sigma}{\lambda}2C_1 (1 + 2M)|x-\hat{x}|^{\alpha'},
     			\end{equation*}
     		while Lemma \ref{lemma_dp_bounds}\eqref{dp_diff} and \eqref{princNonlocal_diff} gives that
     			\begin{align*}
     				\left[ w_\lambda(\nicefrac{x}{\epsilon}; [u](x)) - w_\lambda(\nicefrac{x}{\epsilon}; [u](\hat{x})) \right] \leq \lambda^{-1}C_1 (1 + 2M)|x-\hat{x}|^{\alpha'}.
     			\end{align*}
     		Combining these inequalities we obtain that $\tilde{\varphi}(\hat{x}) - \tilde{\varphi}(x)>0$. If, on the contrary, $|x-\hat{x}|\geq 1$, then $c\psi(x-\hat{x})\geq c$, and from Lemma \ref{lemma_dp_bounds}\eqref{dp_Linfty}, again using $\tau<1$, we have
     			\begin{align*}
     				\epsilon^\sigma \left[ w_\lambda(\nicefrac{x}{\epsilon}; [u](x)) - w_\lambda(\nicefrac{x}{\epsilon}; [u](\hat{x})) \right] \leq \epsilon^\sigma 2\lambda^{-1}C_1 (1 + 2M) < c.
     			\end{align*}
     		This again implies that $\tilde{\varphi}(\hat{x}) - \tilde{\varphi}(x)>0$. Thus $\tilde{\varphi}$ attains a global maximum in $B_\tau(\hat{x})$, which we denote by $\tilde{x}$.
          
          In particular, we have the following behavior at $\tilde{x}$:
              \begin{align}\label{maxXtilde1}
                  Du^\epsilon(\tilde{x}) = Du(\tilde{x}) + \epsilon^{\sigma-1}D_y w_\lambda(\nicefrac{\tilde{x}}{\epsilon}; [u](\hat{x})) - bD\psi(\tilde{x}) - 2c(\tilde{x} - \hat{x}),
              \end{align}
          since $|\tilde{x} - \hat{x}| < \tau < 1$ implies that $\psi(x - \hat{x}) = c|x - \hat{x}|^2$ in a neighborhood of $\tilde{x}$, and
              \begin{align}\label{maxXtxtildeNonloc}
                  &L^\theta_{\nicefrac{\tilde{x}}{\epsilon}} u^\epsilon(\tilde{x}) \nonumber\\
                  &\quad \leq L^\theta_{\nicefrac{\tilde{x}}{\epsilon}} u(\tilde{x}) + \epsilon^\sigma L^\theta_{\nicefrac{\tilde{x}}{\epsilon}} w_\lambda(\nicefrac{\cdot}{\epsilon}; [u](\hat{x}))(\tilde{x}) + bL^\theta_{\nicefrac{\tilde{x}}{\epsilon}} \psi(\tilde{x}) + cL^\theta_{\nicefrac{\tilde{x}}{\epsilon}} \psi(\cdot - \hat{x})(\tilde{x})\nonumber\\
                  &\quad = L^\theta_{\nicefrac{\tilde{x}}{\epsilon}} u(\tilde{x}) + L^\theta_{\nicefrac{\tilde{x}}{\epsilon}}, w_\lambda(\cdot; [u](\hat{x}))(\nicefrac{\tilde{x}}{\epsilon}) + bL^\theta_{\nicefrac{\tilde{x}}{\epsilon}} \psi(\tilde{x}) + cL^\theta_{\nicefrac{\tilde{x}}{\epsilon}} \psi(\tilde{x}-\hat{x}),
              \end{align}
          using the homogeneity and translation invariance of $L^\theta$.
          
          For the following computation we write $w_\lambda = w_\lambda(\cdot; [u](\hat{x}))$ to ease notation. Evaluating \eqref{eq} at $\tilde{x}$, we have
              \begin{equation*}
                  u^\epsilon(\tilde{x}) + \sup_{\theta} \left\{- L^\theta_{\nicefrac{\tilde{x}}{\epsilon}} u^\epsilon(\tilde{x}) - f^\theta(\tilde{x}, \nicefrac{\tilde{x}}{\epsilon})\cdot Du^\epsilon(\tilde{x}) - l^{\theta}(\tilde{x}, \nicefrac{\tilde{x}}{\epsilon}) \right\} = 0,
              \end{equation*}
          and continue estimating:
              \begin{align*}
                  &\sup_{\theta} \left\{- L^\theta_{\nicefrac{\tilde{x}}{\epsilon}} u^\epsilon(\tilde{x}) - f^\theta(\tilde{x}, \nicefrac{\tilde{x}}{\epsilon})\cdot Du^\epsilon(\tilde{x}) - l^{\theta}(\tilde{x}, \nicefrac{\tilde{x}}{\epsilon}) \right\}\\      
                  &\quad\geq \sup_{\theta} \left\{ - L^\theta_{\nicefrac{\tilde{x}}{\epsilon}} u(\tilde{x}) - L^\theta_{\nicefrac{\tilde{x}}{\epsilon}} w_\lambda(\nicefrac{\tilde{x}}{\epsilon}) - bL^\theta_{\nicefrac{\tilde{x}}{\epsilon}} \psi(\tilde{x}) - cL^\theta_{\nicefrac{\tilde{x}}{\epsilon}} \psi(\tilde{x}-\hat{x}) \right.\\
                  &\qquad \left. - f^\theta(\tilde{x}, \nicefrac{\tilde{x}}{\epsilon})\cdot \big(Du(\tilde{x}) + \epsilon^{\sigma-1}D_y w_\lambda(\nicefrac{\tilde{x}}{\epsilon}) - bD\psi(\tilde{x}) - 2c(\tilde{x} - \hat{x})\big) - l^{\theta}(\tilde{x}, \nicefrac{\tilde{x}}{\epsilon}) \right\}\\
                  &\quad \geq \sup_{\theta} \left\{ - L^\theta_{\nicefrac{\tilde{x}}{\epsilon}} u(\tilde{x}) - L^\theta_{\nicefrac{\tilde{x}}{\epsilon}} w_\lambda(\nicefrac{\tilde{x}}{\epsilon}) - f^\theta(\tilde{x}, \nicefrac{\tilde{x}}{\epsilon})\cdot Du(\tilde{x}) - l^{\theta}(\tilde{x}, \nicefrac{\tilde{x}}{\epsilon})\right\}\\
                  &\qquad -M_2\big(\epsilon^{\sigma-1} + b + (2\tau +1)c\big)\\
                  &\quad \geq \sup_{\theta} \left\{ - L^\theta_{\nicefrac{\tilde{x}}{\epsilon}} u(\hat{x}) - L^\theta_{\nicefrac{\tilde{x}}{\epsilon}} w_\lambda(\nicefrac{\tilde{x}}{\epsilon}) - f^\theta(\hat{x}, \nicefrac{\tilde{x}}{\epsilon})\cdot Du(\hat{x}) - l^{\theta}(\hat{x}, \nicefrac{\tilde{x}}{\epsilon})\right\}\\
                  &\qquad -M_2\big(\tau^{\alpha'} + \tau  \big) -M_2\big(\epsilon^{\sigma-1} + b + (2\tau +1)c\big).
              \end{align*}
          Here we have used \eqref{maxXtilde1} and \eqref{maxXtxtildeNonloc} for the first inequality, the bounds \eqref{nonlocBounds} and \eqref{penalBounds} from the second (and onward) together with Part (b), estimate \eqref{princNonlocal_diff} for the third, and chosen a sufficiently large $M_2$---in particular, such that $M_2\geq M_0, M_1$, and independent of $\epsilon, \lambda, \tau, b$ and $c$.
          
          We now use that $w_\lambda(\cdot; [u](\hat{x}))$ and $u$ are solutions of \eqref{dp} and \eqref{eqnonloceff}, respectively, and Lemma \ref{lemma_dp_bounds} \eqref{dp_van_disc_rate} to obtain
              \begin{align*}
                  &\sup_{\theta} \left\{ - L^\theta_{\nicefrac{\tilde{x}}{\epsilon}}, u(\hat{x}) - L^\theta_{\nicefrac{\tilde{x}}{\epsilon}} w_\lambda(\nicefrac{\tilde{x}}{\epsilon}) - f^\theta(\hat{x}, \nicefrac{\tilde{x}}{\epsilon})\cdot Du(\hat{x}) - l^{\theta}(\hat{x}, \nicefrac{\tilde{x}}{\epsilon})\right\}\\
                  &\quad = -\lambda w_\lambda(\nicefrac{\tilde{x}}{\epsilon}; [u](\hat{x})) \geq \overline{H}(\hat{x}, Du(\hat{x}), u) - \lambda C_1(1 + 2M)\\
                  &\quad \geq -u(\hat{x}) - \lambda C_1(1 + 2M).
              \end{align*}
          Combining this with the previous computation we have
              \begin{equation*}
                  u^\epsilon(\tilde{x}) - u(\hat{x}) \leq M_2 \big(\lambda + \epsilon^{\sigma-1} + b + c + \tau^{\alpha'}\big).
              \end{equation*}
          By construction, for all $x\in\RN$ we have $\varphi(x) \leq \varphi(\hat{x}) = \tilde{\varphi}(\hat{x}) \leq \tilde{\varphi}(\tilde{x})$, hence
              \begin{align*}
                  u^\epsilon(x) - u(x) \leq{}& \big(u^\epsilon(\tilde{x}) - u(\hat{x})\big) + \big(u(\hat{x}) - u(\tilde{x})\big)\\
                  &\quad + \epsilon^\sigma\left[w_\lambda\left(\nicefrac{x}{\epsilon}; [u](x)\right) - w_\lambda\left(\nicefrac{\tilde{x}}{\epsilon}; [u](\hat{x})\right)\right] + b[\psi(x) - \psi(\tilde{x})]\\
                  &\quad \leq M_2 \big(\lambda + \epsilon^{\sigma-1} + b + c + \tau^{\alpha'}\big) + M\tau + \frac{\epsilon^\sigma}{\lambda}2C_1(1 + 2M) + M_0 b,
              \end{align*}
          by Lemma \ref{lemma_dp_bounds} \eqref{dp_Linfty}, \eqref{penalBounds} and \eqref{nonlocBounds}.
          
          Sending $b\to 0$ and recalling the definition of $c$, this gives
              \begin{equation*}
                  u^\epsilon(x) - u(x)  \leq M_2 \left(\lambda + \epsilon^{\sigma-1} + \frac{\epsilon^\sigma}{\lambda\tau^{2-\alpha'}} + \tau^{\alpha'}\right),
              \end{equation*}
          again by taking a larger value of $M_2$ if necessary. Thus, by symmetry, we set
              \begin{equation*}
                  \lambda = \epsilon^{\frac{\sigma\alpha'}{2 + \alpha'}}, \quad \tau = \epsilon^{\frac{\sigma}{2+\alpha'}},
              \end{equation*}
          and obtain
              \begin{equation*}
                   u^\epsilon(x) - u(x) \leq 4M_2\epsilon^{\frac{\sigma\alpha'}{2 + \alpha'}}.
              \end{equation*}
      \end{proof}

      \begin{remark}\label{rmk_aPosteriori}
          The proof on Theorem \ref{teoconv} does not rely on establishing the convergence $u^\epsilon\to u$ as $\epsilon\to 0$ beforehand. Consequently, Theorem \ref{teoconv} trivially implies the convergence result (i.e., homogenization).
      \end{remark}
      
      \medskip
      
      We proceed with the proof of the easier results on rates of convergence when some of the dependencies of $H$ are dropped. We note that these results are independent of Theorems \ref{corbarH} and \ref{teoconv}.
      
      \begin{proof}[Proof of Theorem \ref{teo_simple_rate}]
          \textit{(i)} From the assumptions, \eqref{eq} may be written as 
              \begin{equation}\label{easy_eq1}
                  u^\epsilon + H(x, y, u^\epsilon) = 0 \quad \textrm{in }\RN, 
              \end{equation}
          where 
              \begin{equation*}
                  H(x,y,\varphi) = \sup_{\theta \in \Theta} \{-L^\theta_y \varphi(x) -l^{\theta}(y)\}.
              \end{equation*}
          As a function in $\RN\times\mathbb{T}^N\times C^2_b(\RN)$, $H$ in fact independent of $x$, since $L^\theta$ depends on $x$ only through evaluating $u(x+z) - u(x)$ in the integrand of $L^\theta$ and $K^\theta$ is translation invariant. It is however not entirely correct to write $H(y, u^\epsilon)$ instead of $H(x,y, u^\epsilon)$ above. The same can be said for the effective problem, 
              \begin{equation*}
                  u + \bar{H}(x,u) = 0 \quad\textrm{in }\RN, 
              \end{equation*}
          which has the constant solution $u\equiv -\bar{H}(0,0)$.
          
          It follows from the preceding remarks that the associated cell problem is
              \begin{equation}\label{easy_cp_1}
                  \sup_{\theta \in \Theta} \{ -L^\theta_y w(y) - l^{\theta}(y)\} = \bar{H}(0,0) \quad\textrm{in } \mathbb{T}^N,
              \end{equation}
          and has a unique $\mathbb{T}^N$-periodic solution $w=w(y)$, by the results of Lemma \ref{lemma_dp_bounds}.
          
          Define $\bar{v}(x) = u(x) + \epsilon^\sigma w(\nicefrac{x}{\epsilon}) + \epsilon^\sigma\|w\|_\infty = -\bar{H}(0,0) + \epsilon^\sigma w(\nicefrac{x}{\epsilon}) + \epsilon^\sigma\|w\|_\infty$. Substituting in \eqref{eq} and using \eqref{easy_cp_1}, we have
              \begin{align*}
                  \bar{v}(x) + H(x,\nicefrac{x}{\epsilon}, \bar{v}) ={}& -\bar{H}(0,0) + w(\nicefrac{x}{\epsilon}) + \epsilon^\sigma\|w\|_\infty + H(x,\nicefrac{x}{\epsilon}, \epsilon^\sigma w(\nicefrac{\cdot}{\epsilon}))\\
                  =&{} -\bar{H}(0,0) + w(\nicefrac{x}{\epsilon}) + \epsilon^\sigma\|w\|_\infty + H(\nicefrac{x}{\epsilon},\nicefrac{x}{\epsilon}, w) \geq 0.
              \end{align*}
          Thus by comparison, $u^\epsilon \leq \bar{v}$, i.e., $u^\epsilon - u \leq \epsilon^\sigma w(\nicefrac{x}{\epsilon}) + \epsilon^\sigma\|w\|_\infty \leq 2 \epsilon^\sigma\|w\|_\infty.$ The lower bound is similarly obtained.
          
          \textit{(ii)} Arguing as in the first part of the proof we have that $u^\epsilon$ and $u\equiv -H(0,0,0)$ are respectively solutions of
              \begin{equation}\label{easy_eq2}
                  u^\epsilon + H(x, \nicefrac{x}{\epsilon}, Du^\epsilon, u^\epsilon) = 0, \quad \textrm{in }\RN, 
              \end{equation}
          where $H(x, y, p, \varphi) =  \sup_{\theta \in \Theta} \{-L^\theta_y \varphi(x) - f^\theta(y)\cdot p - l^{\theta}(y)\},$ and
              \begin{equation*}
                  u + H(x, Du, u) = 0 \quad \textrm{in }\RN. 
              \end{equation*}
          The associated cell problem is
              \begin{equation}\label{easy_cp_2}
                  H(x,y, 0, w) = \bar{H}(0,0) \quad\textrm{in } \mathbb{T}^N,
              \end{equation}
          and we define $\bar{v}(x) = -H(0,0,0) + \epsilon^\sigma w(\nicefrac{x}{\epsilon}) + C\epsilon^{\sigma-1}\|w\|_{C^1}$, where $C$ is given by \eqref{lip_data}. Again, evaluating in \eqref{easy_eq2}, we have
              \begin{align*}
                  &\bar{v}(x) + H(x,\nicefrac{x}{\epsilon}, D\bar{v}(x), \bar{v})\\
                  &\quad = -H(0,0,0) + \epsilon^\sigma w(\nicefrac{x}{\epsilon}) + C\epsilon^{\sigma-1}\|w\|_{C^1} + H(x, \nicefrac{x}{\epsilon}, \epsilon^{\sigma-1} Dw(\nicefrac{x}{\epsilon}), \epsilon^\sigma w(\nicefrac{\cdot}{\epsilon}))\\
                  &\quad = -H(0,0,0) + \epsilon^\sigma w(\nicefrac{x}{\epsilon}) + C\epsilon^{\sigma-1}\|w\|_{C^1} + H(x, \nicefrac{x}{\epsilon},  0, \epsilon^\sigma w(\nicefrac{\cdot}{\epsilon}))\\
                  &\qquad  - \epsilon^{\sigma-1}\sup_{\theta \in \Theta} \{f^\theta(\nicefrac{x}{\epsilon})\cdot Dw(\nicefrac{x}{\epsilon})\}\\
                  &\quad \geq 0.
              \end{align*}
          We conclude once more that $u^\epsilon \leq \bar{v}$, hence $u^\epsilon - u \leq 2C\epsilon^{\sigma-1} \|w\|_{C^1}$.
      \end{proof}
  
 \noindent {\bf Acknowledgements.} 
  
 A.~R.-P.~was partially supported by Fondecyt Grant Postdoctorado Nacional 2019 No.~3190858. E.~T.~was partially supported by Fondecyt No.~1201897.

 \end{document}